\documentclass{amsart}

\usepackage{amssymb}
\usepackage[all]{xy}

\numberwithin{equation}{section}

\newtheorem{theorem}{Theorem}[section]
\newtheorem{proposition}[theorem]{Proposition}
\newtheorem{lemma}[theorem]{Lemma}

\theoremstyle{definition}
\newtheorem{definition}[theorem]{Definition}

\theoremstyle{remark}
\newtheorem{remark}[theorem]{Remark}

\newcommand{\wt}{\widetilde}
\newcommand{\tbf}{\textbf}


\newcommand{\into}{\hookrightarrow}


\newcommand{\R}{\mathbb{R}}
\newcommand{\C}{\mathbb{C}}

\newcommand{\Z}{\mathbb{Z}}
\newcommand{\N}{\mathbb{N}}


\newcommand{\Supp}{\text{Supp}}
\newcommand{\U}{\mathcal{U}}
\newcommand{\V}{\mathcal{V}}

\newcommand{\B}{\mathbb{B}}
\newcommand{\K}{\mathbb{K}}

\newcommand{\norm}[1]{\left\| #1 \right\|}
\newcommand{\cstar}{\mathfrak{C}^*}
\newcommand{\dstar}{\mathfrak{D}^*}
\newcommand{\qstar}{\mathfrak{Q}^*}

\newcommand{\trunc}{\mathfrak{T}}


\newcommand{\eps}{\varepsilon}


\newcommand{\End}{\text{End}}
\newcommand{\Ad}{\text{Ad}}




\begin{document}

\title{The Mayer-Vietoris Sequence for the Analytic Structure Group}
\author{Paul Siegel}

\begin{abstract}
We develop a Mayer-Vietoris sequence for the {\em analytic structure group} explored by Higson and Roe.  Using explicit formulas for Mayer-Vietoris boundary maps, we give a new proof and generalizations of Roe's partitioned manifold index theorem (\cite{DualToeplitz}).
\end{abstract}
\maketitle

\section{Introduction}

Let $X$ be a proper metric space.  Higson, Roe, and others have studied a map
\begin{equation*}
\mu \colon K_p(X) \to K_p(C^*(X))
\end{equation*}
from the K-homology of $X$ to the K-theory of the coarse C*-algebra of $X$ called the {\em coarse assembly map}.  It fits into a long exact sequence
\begin{equation} \label{ASES}
\to K_{p+1}(C^*(X)) \to S_p(X) \to K_p(X) \to K_p(C^*(X)) \to
\end{equation}
with a group $S_p(X)$ called the {\em analytic structure group} for its relationship with the structure set in algebraic topology (established in \cite{SurgeryAnalysis1}, \cite{SurgeryAnalysis2}, and \cite{SurgeryAnalysis3}).

The goal of this note is to develop a Mayer-Vietoris sequence for the analytic structure group which is compatible with the ordinary Mayer-Vietoris sequence in K-homology and the Mayer-Vietoris sequence for coarse spaces developed in \cite{CoarseMV}.  Thus for an appropriate decomposition $X = Y_1 \cup Y_2$, these Mayer-Vietoris sequences fit together with \eqref{ASES} to form a braid diagram:

\bigskip

{\tiny
\begin{equation*}
\xymatrix{
K_p(Y_1) \oplus K_p(Y_2) \ar@/^1pc/[r] \ar[dr] & K_p(C^*(Y_1)) \oplus K_p(C^*(Y_2)) \ar@/^1pc/[r] \ar[dr] & K_p(C^*(X)) \ar@/^1pc/[r] \ar[dr] & S_{p-1}(X) \\
K_p(C^*(Y_1 \cap Y_2)) \ar[ur] \ar[dr] & K_p(X) \ar[ur] \ar[dr] & S_{p-1}(Y_1) \oplus S_{p-1}(Y_2) \ar[ur] \ar[dr] & K_{p-1}(C^*(Y_1 \cap Y_2)) \\
S_p(X) \ar[ur] \ar@/_1pc/[r] & S_{p-1}(Y_1 \cap Y_2) \ar[ur] \ar@/_1pc/[r] & K_{p-1}(Y_1 \cap Y_2) \ar[ur] \ar@/_1pc/[r] & K_{p-1}(Y_1) \oplus K_{p-1}(Y_2) 
}
\end{equation*}
}
\bigskip

Exploiting connections between the coarse assembly map and index theory for elliptic operators, we use this braid diagram to give a new proof of a theorem of Roe (\cite{DualToeplitz}).  Roe posed and solved an index problem on a non-compact manifold $M$ which is partitioned by a compact hypersurface $N$, meaning $M$ is the union of two submanifolds whose common boundary is $N$.  Our approach yields new generalizations of Roe's theorem to equivariant indices and to non-compact partitioning hypersurfaces, and as an application we give a simple proof of a classical theorem of Gromov and Lawson (\cite{GromovLawson}) about topological obstructions to the existence of positive scalar curvature metrics.

\section{The Assembly Map and the Analytic Structure Group}

We begin by reviewing the construction of the coarse assembly map and the analytic structure group following the textbook \cite{AnalyticKH}.

\subsection{K-Homology}
The source of the coarse assembly map is the K-homology of the space $X$; we now review a C*-algebraic definition of K-homology.

\begin{definition}
Let $X$ be a locally compact Hausdorff space, let $H$ be a Hilbert space, and let $\rho \colon C_0(X) \to \B(H)$ be a C*-algebra representation.

\begin{itemize}
\item The {\em dual algebra} of $X$ is the C*-algebra $\dstar_{\rho}(X)$ consisting of those operators $T \in \B(H)$ such that $T$ commutes with $\rho(f)$ modulo compact operators for each $f \in C_0(X)$.  Such operators are said to be {\em pseudolocal}

\item The {\em locally compact algebra} is the C*-ideal $\cstar_{\rho}(X)$ in $\dstar(X)$ consisting of those operators $T \in \B(H)$ such that $\rho(f) T$ and $T \rho(f)$ are compact for each $f \in C_0(X)$.  Such operators are said to be {\em locally compact}

\item Let $\qstar_{\rho}(X)$ denote the quotient C*-algebra $\dstar_{\rho}(X)/\cstar_{\rho}(X)$.
\end{itemize}
\end{definition} 

We will often use the notation ``$\sim$'' to denote equality modulo compact operators; thus $T \in \dstar_{\rho}(X)$ if and only if $[T,\rho(f)] \sim 0$ for every $f \in C_0(X)$ and $T \in \cstar_{\rho}(X)$ if and only if $T \rho(f) \sim \rho(f) T \sim 0$ for every $f \in C_0(X)$. 

Note that the dual algebra, the locally compact algebra, and their quotient each depend on the representation $\rho$.  However, their K-theory groups are independent of $\rho$ so long as $C_0(X)$ is separable (equivalently, $X$ is second countable), $H$ is separable, and $\rho$ is ample (meaning $\rho$ is nondegenerate and $\rho(f)$ is a compact operator only if $f = 0$ for $f \in C_0(X)$).  See chapter 5 of \cite{AnalyticKH} for further details.  Because of this we will often abuse notation and write $\dstar(X)$, $\cstar(X)$ and $\qstar(X)$ with the understanding that the representations used to define these C*-algebras are ample and the Hilbert spaces are separable.

\begin{definition}
Let $X$ be a second countable locally compact Hausdorff space.  The {\em $p$th K-homology group} of $X$, $p \in \Z$, is defined to be:
\begin{equation*}
K_p(X) = K_{1-p}(\qstar(X))
\end{equation*}
\end{definition}

\begin{remark}
In fact we shall see that $K_p(\cstar(X)) = 0$, so we could have defined $K_p(X)$ to be $K_{1-p}(\dstar(X))$ as is done in chapter 5 of \cite{AnalyticKH}.
\end{remark}

The K-homology groups form a generalized homology theory (in the sense of the Eilenberg-Steenrod axioms) which is naturally dual to topological K-theory.  In particular they enjoy a certain functoriality property which can be understood at the level of dual algebras as follows.  To begin, let $\phi \colon \U \to Y$ be a continuous proper map from an open subset $\U$ of $X$ to $Y$.  Extend $\phi$ to a map $\phi^+ \colon X^+ \to Y^+$ between one-point compactifications by sending the complement of $\U$ in $X$ to the point at infinity in $Y^+$.  Then $\phi^+$ induces a $*$-homomorphism $C(Y^+) \to C(U^+) \subseteq C(X^+)$ given by $f \mapsto f \circ \phi^+$, and we define $\phi^*$ to be the restriction of this map to $C_0(Y)$.

\begin{definition}
Let $X$ and $Y$ be second countable locally compact Hausdorff spaces, let $\rho_X \colon C_0(X) \to \B(H_X)$ and $\rho_Y \colon C_0(Y) \to \B(H_Y)$ be ample representations on separable Hilbert spaces, and let $\phi \colon \U \to Y$ be a continuous proper map defined on an open subset $\U \subseteq X$.  An isometry $V \colon H_X \to H_Y$ {\em topologically covers} $\phi$ if
\begin{equation*}
V^* \rho_Y(f) V \sim \rho_X(\phi^*(f))
\end{equation*}
for every $f \in C_0(Y)$.
\end{definition}

By Voiculescu's theorem, every map $\phi$ of the sort described in the definition is topologically covered by some isometry.  If $V$ is a topological covering isometry for $\phi$ then the $*$-homomorphism $\Ad_V \colon \B(H_X) \to \B(H_Y)$ maps $\dstar(X)$ into $\dstar(Y)$ and $\cstar(X)$ into $\cstar(Y)$, and the induced maps on K-theory depend only on $\phi$ and not on the chosen covering isometry (\cite{AnalyticKH}, chapter 5).  Thus every continuous proper map $\phi \colon \U \to Y$ on an open subset of $X$ induces a map $\phi_* = (\Ad_V)_* \colon K_p(X) \to K_p(Y)$ on K-homology defined using a topological covering isometry.

\subsection{Coarse K-theory}
The target of the assembly map is the K-theory of the {\em coarse algebra} of the space $X$, so we now review the definition and basic properties of this C*-algebra.  Along the way we will also define the structure algebra.

For much of what follows we will be interested in proper metric spaces, i.e. a metric spaces whose closed and bounded subsets are all compact.  Any complete Riemannian manifold is a proper metric space by the Hopf-Rinow theorem; indeed, this is one our main sources of examples.

\begin{definition}
Let $X$ and $Y$ be proper metric space and let $\rho_X \colon C_0(X) \to \B(H_X)$ and $\rho_Y \colon C_0(Y) \to \B(H_Y)$ be representations.  The {\em support} of a bounded operator $T \colon H_X \to H_Y$ is the set $\Supp(T)$ of all points $(y,x) \in Y \times X$ with the following property: for every open neighborhood $\V \times \U \subseteq Y \times X$ of $(y,x)$ there exist functions $f_1 \in C_0(\U)$ and $f_2 \in C_0(\V)$ such that $\rho_Y(f_2) T \rho_X(f_1) \neq 0$.  An operator in $\B(H_X)$ is said to be {\em controlled} if its support lies within a uniformly bounded neighborhood of the diagonal in $X \times X$.
\end{definition}

The set of all controlled operators forms a $*$-subalgebra, and we will be interested in the intersection of this $*$-subalgebra with the dual algebra and the locally compact algebra.  Most of the results of this paper generalize easily to account for free and proper group actions, so we will include them at the outset (though we will not need to consider equivariant dual algebras or equivariant K-homology).  For simplicity, the reader is invited to ignore the group action during the first reading.    

So let $X$ be a proper metric space equipped with a free and proper action of a countable discrete group $G$ of isometries of $X$.  We will need to consider representations of $C_0(X)$ which are compatible with the $G$-action; the motivating example is the representation of $C_0(X)$ on $L^2(X,\mu)$ by multiplication operators where $\mu$ is a $G$-invariant Borel measure.  More abstractly:

\begin{definition}
Let $H$ be a Hilbert space equipped with a representation 
\begin{equation*}
\rho \colon C_0(X) \to \B(H)
\end{equation*} 
and a unitary representation 
\begin{equation*}
U \colon G \to \B(H)
\end{equation*}
We say that the the triple $(H,U,\rho)$ is a {\em $G$-equivariant $X$-module} or simply a {\em $(X,G)$-module} if $U_\gamma \rho(f) = \rho(\gamma^* f) U_\gamma$ for every $\gamma \in G$, $f \in C_0(X)$.  
\end{definition}

As usual if $H$ carries a unitary representation $U$ of $G$ then we say that an operator $T \in \B(H)$ is $G$-equivariant if $U_\gamma T U_\gamma^* = T$ for every $\gamma \in G$.

\begin{definition}
Let $X$ be a proper metric space equipped with a free and proper action of a countable discrete group of isometries and let $(H_X,U,\rho_X)$ be a $G$-equivariant $X$-module.
\begin{itemize}
\item The {\em structure algebra} of the pair $(X,G)$ is the C*-algebra $D_G^*(X)$ obtained by taking the norm closure of the set of all $G$-equivariant pseudolocal controlled operators.
\item The {\em coarse algebra} of the pair $(X,G)$ is the C*-ideal $C_G^*(X)$ in $D_G^*(X)$ obtained by taking the norm closure of the set of all $G$-equivariant locally compact controlled operators.
\item Let $Q_G^*(X)$ denote the quotient C*-algebra $D_G^*(X)/C_G^*(X)$.
\end{itemize}
\end{definition}

\begin{remark}
It is important that we restrict to $G$-invariant operators before passing to the closure rather than taking the $G$-invariant part of the closure.  Recent examples show that these two procedures do not in general yield the same C*-algebras.
\end{remark}

As before, both $D_G^*(X)$ and $C_G^*(X)$ depend on the representation used to define them, but the ambiguity disappears at the level of K-theory under mild additional hypotheses.  For the coarse algebra it is enough to assume that the Hilbert space is separable and the representation is ample, but for the structure algebra we must assume that the Hilbert space is separable and the representation is the countable direct sum of a fixed ample representation.  Following \cite{AnalyticKH}, we refer to such representations as {\em very ample}.

There is a considerable amount of literature on $C_G^*(X)$ due in part to its relationship with index theory.  Its K-theory groups are functorial for a class of maps between metric spaces called {\em coarse maps} which are compatible with large scale geometry (in the same way that continuous maps are compatible with small scale geometry).  There is a general theory of abstract coarse structures, but for our present purposes it will suffice to review the basic features of the coarse structure of a metric space.

If $S$ is any set and $\alpha_1, \alpha_2 \colon S \to X$ are maps, we say that $\alpha_1$ and $\alpha_2$ are {\em close} if there is a constant $C$ such that $d(\alpha_1(s),\alpha_2(s)) \leq C$ for every $s \in S$.  A map $\phi \colon X \to Y$ between metric spaces is said to be {\em coarse} if $\phi^{-1}(B)$ is a bounded subset of $X$ whenever $B$ is a bounded subset of $Y$ and $\phi \circ \alpha_1$ and $\phi \circ \alpha_2$ are close whenever $\alpha_1$ and $\alpha_2$ are close.  $\phi$ is said to be a {\em coarse equivalence} if there is another coarse map $\psi \colon Y \to X$ such that $\phi \circ \psi$ and $\psi \circ \phi$ are close to the identity maps on $Y$ and $X$, respectively.  For instance, the standard embedding of $\Z$ into $\R$ is a coarse equivalence.

As with K-homology, the functoriality of coarse K-theory can be implemented at the level of the coarse algebra using another notion of covering isometry.

\begin{definition}
Let $X$ and $Y$ be proper metric spaces equipped with representations $\rho_X \colon C_0(X) \to \B(H_X)$ and $\rho_Y \colon C_0(Y) \to \B(H_Y)$.  An isometry $V\colon H_X \to H_Y$ \textit{coarsely covers} a coarse map 
\begin{equation*}
\phi\colon X \to Y
\end{equation*}
if $\pi_1$ and $\phi \circ \pi_2$ are close as maps $\Supp(V) \subseteq Y \times X \to Y$.  If $H_X$ and $H_Y$ carry unitary representations of a group $G$ then we say that a coarse covering isometry $V$ is $G$-equivariant if in addition $V U_g^X = U_g^Y V$.
\end{definition}

According to chapter 6 of \cite{AnalyticKH}, every coarse map $\phi$ admits a coarse covering isometry $V$ so long as $\rho_X$ is nondegenerate and $\rho_Y$ is ample; a straightforward variation on that argument shows that if a discrete group $G$ acts freely and properly on $X$ and $Y$ by isometries and $\phi$ is $G$-invariant then $V$ can be taken to be a $G$-equivariant coarse covering isometry.  Moreover $\Ad_V$ maps $C_G^*(X)$ into $C_G^*(Y)$ and the induced map on K-theory is independent of the $G$-equivariant coarse covering isometry chosen.  Also note that if $V$ coarsely covers $\phi$ then it also coarsely covers any map which is close to $\phi$, so $\phi$ induces a map $\phi_* = (\Ad_V)_* \colon K_p(C_G^*(X)) \to K_p(C_G^*(Y))$ which depends only on the closeness equivalence class of $\phi$.

\subsection{The Analytic Structure Group}

Recall that we defined the structure algebra of a proper $G$-space $X$ to be the C*-algebra $D_G^*(X)$ obtained by taking the norm closure of the space of $G$-invariant pseudolocal controlled operators associated to a representation of $C_0(X)$.

\begin{definition}
Let $X$ be a proper metric space equipped with a free and proper action of a discrete group $G$ by isometries and an ample $(X,G)$-module.  The {\em analytic structure group} of the pair $(X,G)$ is defined to be $S_p(X,G) = K_{1-p}(D_G^*(X))$.  
\end{definition}

While $K_p(X)$ is an invariant of the small scale geometry of $X$ and $K_p(C_G^*(X))$ is an invariant of the large scale geometry of $X$, the analytic structure group depends simultaneously on both large and small scale behavior.  As usual this will be clarified by understanding its functorial properties: we will prove that $S_p(X,G)$ is functorial for {\em uniform maps}, i.e. maps which are both continuous and coarse.  This functoriality is once again implemented using covering isometries.

\begin{definition}
Let $H_X$ and $H_Y$ be Hilbert spaces which carry representations of $C_0(X)$ and $C_0(Y)$, respectively.  An isometry $V \colon H_X \to H_Y$ {\em uniformly covers} a uniform map $\phi \colon X \to Y$ if it both topologically and coarsely covers $\phi$.
\end{definition}

If $X$ and $Y$ carry $G$-actions and $H_X$ and $H_Y$ carry $G$-equivariant representations then one can define $G$-equivariant uniform covering isometries in the same way as in our discussion of $G$-equivariant coarse covering isometries.  $G$-equivariant uniform covering isometries always exist, but the proof of this fact does not appear in the literature and thus we provide it here.  Our strategy is to build "local" covering isometries and patch them together using a partition of unity.

\begin{lemma} \label{GTopIsometry}
Let $X = \U \times G \to \U$ and $Y = \V \times G \to \V$ be trivial $G$-covers of proper metric spaces $\U$ and $\V$ and let $H_X$ and $H_Y$ be ample $(X,G)$- and $(Y,G)$-modules, respectively.  Then any continuous $G$-equivariant map $\phi \colon X \to Y$ is topologically covered by a $G$-equivariant isometry $H_X \to H_Y$.
\end{lemma}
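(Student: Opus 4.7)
The plan is to decompose everything along the $G$-fibers and reduce to a non-equivariant application of Voiculescu's theorem on $\U$ and $\V$, combined with an explicit $G$-equivariant unitary that handles the ``shearing'' part of $\phi$.

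First I would decompose the Hilbert spaces. Since $\U \times \{g\}$ is clopen in $X$ for each $g \in G$, one can extend $\rho_X$ to bounded Borel functions and set $P_g := \rho_X(\chi_{\U \times \{g\}})$; these are mutually orthogonal projections summing, in the strong operator topology, to the identity. The equivariance $U_h P_g U_h^* = P_{hg}$ then yields a $G$-equivariant unitary identification $H_X \cong \ell^2(G) \otimes H_\U$ with $H_\U := P_e H_X$. A short argument using compactly supported cutoffs on $\U$ shows that $H_\U$ inherits an ample representation of $C_0(\U)$ from $\rho_X$; the analogous decomposition produces $H_Y \cong \ell^2(G) \otimes H_\V$ with $H_\V$ ample.

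Next I would decompose $\phi$. Since $\phi$ is $G$-equivariant it is determined by its restriction to $\U \times \{e\}$, which I write as $u \mapsto (\phi_1(u), \phi_2(u))$ with $\phi_1 \colon \U \to \V$ continuous and $\phi_2 \colon \U \to G$ locally constant (being continuous into a discrete space), so that $\phi(u, g) = (\phi_1(u), g\phi_2(u))$. Setting $\U_\gamma := \phi_2^{-1}(\gamma)$ gives a clopen partition $\U = \bigsqcup_\gamma \U_\gamma$, and one has a factorization $\phi = (\phi_1 \times \mathrm{id}) \circ \tilde\phi$, where $\tilde\phi(u, g) := (u, g\phi_2(u))$ is a $G$-equivariant homeomorphism of $X$. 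For $\tilde\phi$, define $\tilde V \colon H_X \to H_X$ by $\tilde V(\delta_g \otimes \eta_\gamma) = \delta_{g\gamma} \otimes \eta_\gamma$ on each summand $\ell^2(G) \otimes H_{\U_\gamma}$; this is a $G$-equivariant unitary (the right-multiplication shift commutes with the left $G$-action) and a direct calculation gives $\tilde V^* \rho_X(f) \tilde V = \rho_X(\tilde\phi^* f)$ exactly. For $\phi_1 \times \mathrm{id}$, invoke Voiculescu's theorem on the ample modules $H_\U, H_\V$ to obtain an isometry $W \colon H_\U \to H_\V$ topologically covering $\phi_1$, and set $V_0 := 1_{\ell^2(G)} \otimes W$; it is $G$-equivariant and covers $\phi_1 \times \mathrm{id}$ because the covering relation reduces sheet-by-sheet to that of $W$.

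Composing, $V := V_0 \circ \tilde V$ is a $G$-equivariant isometry $H_X \to H_Y$ with
\[
V^* \rho_Y(f) V = \tilde V^* V_0^* \rho_Y(f) V_0 \tilde V \sim \tilde V^* \rho_X((\phi_1 \times \mathrm{id})^* f) \tilde V = \rho_X(\phi^* f),
\]
so $V$ topologically covers $\phi$. The main technical obstacle I anticipate is the initial decomposition $H_X \cong \ell^2(G) \otimes H_\U$ with $H_\U$ ample: extending $\rho_X$ to bounded Borel functions so that the projections $P_g$ are defined, checking that they behave correctly under the $G$-action, and propagating ampleness from $H_X$ down to $H_\U$ via compactly supported cutoffs all require care. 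Once that reduction is in place, the remaining construction and verification are essentially bookkeeping.
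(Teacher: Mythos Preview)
Your proposal is correct and follows the same skeleton as the paper's argument: decompose $H_X \cong \ell^2(G)\otimes H_\U$ and $H_Y \cong \ell^2(G)\otimes H_\V$ along the $G$-fibres, invoke Voiculescu on the base, and tensor back up with $\ell^2(G)$. The paper's proof is terser: it simply takes $V\colon H_\U \to H_\V$ covering ``the restriction of $\phi$ to $\U\times\{e\}$'' and declares $1\otimes V$ to be the answer. Read literally this presumes that $\phi(\U\times\{e\})\subseteq \V\times\{e\}$, i.e.\ that the locally constant component $\phi_2$ is trivial. Your explicit factorisation $\phi=(\phi_1\times\mathrm{id})\circ\tilde\phi$ and the right-shift unitary $\tilde V$ handle the general case and make the argument honest; this is the main thing your approach buys.

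Two small points worth tightening. First, when you check that $V_0=1\otimes W$ topologically covers $\phi_1\times\mathrm{id}$, the difference $V_0^*\rho_Y(f)V_0-\rho_X((\phi_1\times\mathrm{id})^*f)$ is an $\ell^2(G)$-direct sum of compact operators on $H_\U$, and an infinite direct sum of compacts need not be compact; you should note that $f\in C_0(\V\times G)$ forces $\|f(\cdot,g)\|_\infty\to 0$, so the summand norms tend to zero and the direct sum is a norm limit of compacts. Second, Voiculescu requires $\phi_1\colon \U\to\V$ to be proper; this follows because $\tilde\phi$ is a homeomorphism and hence $\phi_1\times\mathrm{id}=\phi\circ\tilde\phi^{-1}$ is proper, whence so is $\phi_1$. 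Once these are recorded, your argument is complete.
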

\begin{proof}
Let $H_\U$ denote the closure of $C_0(\U \times \{e\}) H_X$ where $e \in G$ is the identity and define $H_\V$ similarly.  $H_\U$ and $H_\V$ carry ample representations of $C_0(\U)$ and $C_0(\V)$, respectively, and hence there is an isometry $V \colon H_\U \to H_\V$ which topologically covers the restriction of $\phi$ to $U \times \{e\}$.  We have that $H_X \cong \ell^2(G) \otimes H_\U$ and $H_Y \cong \ell^2(G) \otimes H_\V$, so $1 \otimes V \colon H_X \to H_Y$ is an equivariant topological covering isometry for $\phi$.
\end{proof}

To construct $G$-equivariant uniform covering isometries, we break $X$ and $Y$ up into pieces for which the lemma applies.  To make the argument work it is important that the $(Y,G)$-module be very ample, meaning it is the countable direct sum of a fixed ample representation.

\begin{proposition} \label{UniformIsom}
Let $H_X$ be an ample $(X,G)$-module and let $H_Y$ be a very ample $(Y,G)$-module.  Then every equivariant uniform map $\phi \colon X \to Y$ is uniformly covered by an equivariant isometry $V \colon H_X \to H_Y$.
\end{proposition}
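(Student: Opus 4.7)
The plan is to follow the strategy sketched just before the statement: construct local equivariant topological covering isometries via Lemma \ref{GTopIsometry} and patch them together with an equivariant partition of unity, using the very ampleness of $H_Y$ to keep the local pieces in pairwise orthogonal subspaces of $H_Y$.

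First I would decompose $X$ into manageable $G$-invariant pieces. Since $G$ acts freely, properly, and isometrically, I can choose a countable locally finite open cover $\{X_n\}$ of $X$ with each $X_n = G \cdot U_n$, where $U_n$ is a bounded open set of diameter at most $1$ chosen so that $g U_n \cap U_n = \emptyset$ for $g \neq e$; this makes $X_n$ a trivial $G$-cover of $U_n$. Extending a partition of unity from a fundamental domain by $G$-invariance produces an equivariant partition of unity $\{\chi_n\}$ subordinate to $\{X_n\}$. Because $\phi$ is coarse and hence bornologous, there is a single constant $C_0$ with $\text{diam}(\phi(U_n)) \leq C_0$ for all $n$; choose bounded open $V_n \supseteq \phi(U_n)$ of diameter at most $C_0+1$, small enough that $Y_n := G \cdot V_n$ is a trivial $G$-cover. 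The restriction $\phi|_{X_n} \colon X_n \to Y_n$ is then a continuous equivariant map between trivial $G$-covers.

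Next I would build the local covering isometries and patch them together. Using very ampleness, decompose $H_Y = \bigoplus_n H_Y^{(n)}$ into ample $(Y,G)$-modules, let $H_X^{(n)} = \overline{\rho_X(C_0(X_n)) H_X}$ and $H_{Y_n}^{(n)} = \overline{\rho_Y(C_0(Y_n)) H_Y^{(n)}}$, and apply Lemma \ref{GTopIsometry} to obtain equivariant topological covering isometries $W_n \colon H_X^{(n)} \to H_{Y_n}^{(n)}$ for $\phi|_{X_n}$. Set
\begin{equation*}
V \xi = \sum_n W_n \rho_X(\chi_n^{1/2}) \xi.
\end{equation*}
Because $\rho_X(\chi_n^{1/2})\xi$ lies in $H_X^{(n)}$ and the $H_Y^{(n)}$ are mutually orthogonal, $\norm{V\xi}^2 = \sum_n \norm{\rho_X(\chi_n^{1/2})\xi}^2 = \inner{\rho_X(\sum_n \chi_n)\xi,\xi} = \norm{\xi}^2$, so $V$ is an isometry, and equivariance is inherited from the $W_n$ and the $\chi_n$.

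Finally, I would verify the two covering conditions. Topological covering falls out easily: orthogonality of the $H_Y^{(n)}$ kills the cross terms in $V^* \rho_Y(f) V$, leaving $\sum_n \rho_X(\chi_n^{1/2}) \rho_X(\phi^* f) \rho_X(\chi_n^{1/2}) \sim \rho_X(\phi^* f)$ modulo compacts. The main obstacle is the coarse-covering condition: I need $\text{supp}(V)$ to sit in a uniformly bounded neighborhood of the graph of $\phi$. The explicit formula in Lemma \ref{GTopIsometry} realizes $W_n$ as $1 \otimes W_n^{\text{loc}}$ under $H_X^{(n)} \cong \ell^2(G) \otimes H_{U_n}$ and $H_{Y_n}^{(n)} \cong \ell^2(G) \otimes H_{V_n}$, so the support of $W_n$ is the diagonal $G$-orbit of $\text{supp}(W_n^{\text{loc}}) \subseteq V_n \times U_n$. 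For $(y,x) = (gy_0,gx_0)$ in this orbit, $G$-equivariance of $\phi$ gives $d(y,\phi(x)) = d(y_0,\phi(x_0))$; since both $y_0$ and $\phi(x_0)$ lie in $V_n$, this is at most $\text{diam}(V_n) \leq C_0 + 1$, a bound uniform in $n$. This is precisely the coarse control required.
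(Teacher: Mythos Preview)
Your overall strategy matches the paper's, and your support analysis at the end is in fact more careful than the paper's somewhat terse justification. However, there is a genuine gap in your construction of the sets $V_n$. You choose the $U_n$ first, with a fixed uniform diameter bound, and then assert the existence of open $V_n \supseteq \phi(U_n)$ such that $G \cdot V_n$ is a trivial $G$-cover of $V_n$. Nothing guarantees this is possible: bornologousness bounds $\text{diam}(\phi(U_n))$ uniformly by some $C_0$, but the free proper isometric $G$-action on $Y$ need not have any uniform injectivity radius. For instance, if $Y$ is the hyperbolic plane and $G = \Z$ acts via a parabolic isometry, the displacement $d(y, gy)$ tends to $0$ as $y$ approaches the cusp, so a set of any fixed positive diameter sufficiently far out in the cusp will meet its own nontrivial $G$-translates. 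In such situations there is no $V_n$ with your two required properties, and the construction breaks down. Shrinking the uniform bound on $\text{diam}(U_n)$ does not help, since continuity of $\phi$ gives no uniform control on $\text{diam}(\phi(U_n))$.

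The fix, which is exactly what the paper does, is to reverse the order: first fix a cover $\{\V_n\}$ of $Y$ by $G$-invariant evenly-covered open sets whose quotients $\pi_Y(\V_n)$ have uniformly bounded diameter, and then use continuity of $\phi$ (equivalently, of the induced map $X_G \to Y_G$) to choose the cover $\{\U_m\}$ of $X$ fine enough that each $\phi(\U_m)$ lands inside some $\V_{n(m)}$. This secures the compatibility you need without appealing to a nonexistent uniform injectivity radius on $Y$, and the remainder of your argument---the $G$-invariant partition of unity, the direct sum into the very ample $H_Y$ indexed by $m$, and your diagonal-orbit support computation---goes through unchanged.
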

\begin{proof}
Assume $H_Y = \bigoplus_\N H$ where $H$ is a Hilbert space carrying a fixed ample representation $\rho_Y \colon C_0(Y) \to \B(H)$.  Let $\pi_X \colon X \to X_G$ and $\pi_Y \colon Y \to Y_G$ be the natural quotient maps for the $G$ action, and choose countable locally finite open covers $\{\U_m\}$ of $X$ and $\{\V_n\}$ of $Y$ with the following properties:
\begin{itemize}
\item Each $\U_m$ is $G$-invariant and evenly covers $\pi_X(\U_m)$, and similarly for $\V_n$.
\item $\{\pi_X(\U_m)\}$ and $\{\pi_Y(\V_n)\}$ have uniformly bounded diameters
\item For every $m$ there exists $n(m)$ such that $\phi(\U_m) \subseteq \V_{n(m)}$
\end{itemize}

Let $H_m^X$ denote the closure of $\rho_X(C_0(\U_m))H_X$ and let $H_n^Y$ denote the closure of $\rho_Y(C_0(\V_n))H$.  Thus $H_m^X$ carries an ample $G$-equivariant representation of $C_0(\U_m)$ and $H_n^Y$ carries an ample $G$-equivariant representation of $C_0(\V_n)$.  Since $\phi$ restricts to a continuous map $\U_m \to \V_{n(m)}$, there is a $G$-equivariant isometry $V_m \colon H_m^X \to H_{n(m)}^Y$ which topologically covers $\phi|_{\U_m}$ by Lemma \ref{GTopIsometry}.

Let $\{h_m\}$ be a $G$-invariant partition of unity subordinate to the open cover $\{\U_m\}$ and define $V \colon H_X \to H_Y = \bigoplus_\N H$ to be the strong limit
\begin{equation*}
V = \bigoplus_m V_m \rho_X(h_m^{1/2})
\end{equation*}

It is clear that $V$ is a $G$-equivariant isometry.  To show that $V$ topologically covers $\phi$, we must show that $\rho_X(g \circ \phi) \sim V^* \rho_Y(g) V$ for every $g \in C_0(Y)$.  We have 
\begin{align*}
V^* \rho_Y(g) V &= \bigoplus_m \rho_X(h_m^{1/2}) V_m^* \rho_Y(g) V_m \rho_X(h_m^{1/2})\\
&\sim \sum_m \rho_X(h_m^{1/2}) \rho_X(g \circ \phi|_{\U_m}) \rho_X(h_m^{1/2})\\
&= g \circ \phi
\end{align*}
since $V_m$ topologically covers $\phi|_{\U_m}$.

Finally, to show that $V$ coarsely covers $\phi$ note that $\Supp(V) = \bigcup_m \Supp(V_m)$ and $\Supp(V_m) \subseteq \V_{n(m)} \times \U_m$.  Since the diameters of the sets $\V_n$ and $\U_m$ are uniformly bounded and $\phi$ maps $\U_m$ into $\V_{n(m)}$, the restrictions of $\pi_1$ and $\phi \circ \pi_2$ to $\Supp(V)$ are close.  This completes the proof.
\end{proof}

If $V$ is a $G$-equivariant covering isometry for a $G$-equivariant uniform map $\phi$ then $\Ad_V$ maps $D_G^*(X)$ into $D_G^*(Y)$ and the induced map on K-theory is independent of the $G$-equivariant covering isometry chosen.  These observations follow immediately by combining the corresponding arguments for K-homology and coarse K-theory.  The conclusion is that every $G$-equivariant uniform map $\phi \colon X \to Y$ induces a map $\phi_* = (\Ad_V)_* \colon S_p(X,G) \to S_p(Y,G)$.

\subsection{The Assembly Map}

We are now ready to construct the (equivariant) coarse assembly map described in the introduction.  As usual let $X$ be a proper metric space on which $G$ acts freely and properly by isometries.  Let $X_G$ denote the quotient space of $X$ by $G$.  There is an isomorphism
\begin{equation} \label{KHControl}
Q_G^*(X) \cong \qstar(X_G)
\end{equation}
and hence the long exact sequence in K-theory associated to the short exact sequence
\begin{equation*}
0 \to C_G^*(X) \to D_G^*(X) \to Q_G^*(X) \to 0
\end{equation*}
takes the form:
\begin{equation} \label{GASES}
\to K_{p+1}(C_G^*(X)) \to S_p(X,G) \to K_p(X_G) \to K_p(C_G^*(X)) \to
\end{equation}
In the case where $X_G$ is a compact manifold and $X$ is its universal cover, this is the {\em analytic surgery exact sequence} investigated by Higson and Roe in \cite{SurgeryAnalysis1}, \cite{SurgeryAnalysis2}, and \cite{SurgeryAnalysis3}.  In any event, the boundary map $K_p(X_G) \to K_p(C_G^*(X))$ is the desired coarse assembly map.

The key is the isomorphism \eqref{KHControl}.  It is proved in \cite{AnalyticKH} and in \cite{PaschkeSheaf} (using sheaf-theoretic techniques).  We will give a brief outline of the argument in \cite{AnalyticKH} so that we can adapt it to the relative setting in the next section.  The main tool is a certain {\em truncation operator} defined as follows:

\begin{definition} \label{truncation}
Let $X$ be a proper metric space and let $\rho \colon C_0(X) \to \B(H_X)$ be a representation.  Let $\{U_n\}$ be a countable locally finite collection of open subsets of $X$ and let $\{h_n\}$ be a subordinate partition of unity.  Given any $T \in \B(H_X)$, define $\trunc(T)$ to be the strong limit of the series $\sum_n \rho(h_n^{1/2}) T \rho(h_n^{1/2})$.  $\mathfrak{T}$ is called the {\em truncation} of $T$ relative to the open cover $\{\U_n\}$.
\end{definition}

Here are the properties of the truncation construction which we will need:

\begin{proposition} \label{TruncFacts}
In the setting of Definition \ref{truncation}, the following hold:
\begin{itemize}
\item $\trunc \colon \B(H_X) \to \B(H_X)$ is a continuous linear map.
\item $\Supp(\trunc(T)) \subseteq \bigcup_n \overline{\U_n \times \U_n}$.
\item If $T$ is pseudolocal then $\trunc(T)$ is pseudolocal and $T - \trunc(T)$ is locally compact.
\end{itemize}
\end{proposition}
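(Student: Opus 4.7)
The plan is to treat the three items in order, with the common thread that local finiteness of $\{\U_n\}$ turns the formal series $\sum_n \rho(h_n^{1/2}) T \rho(h_n^{1/2})$ into a genuinely finite sum whenever one composes on either side with $\rho(f)$ for $f \in C_c(X)$. Write $A_n = \rho(h_n^{1/2})$ throughout, so that $\sum_n A_n^2 \leq I$ in the strong operator topology (with equality when $\rho$ is nondegenerate).

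For item (1), first bound the finite partial sums uniformly: for any finite index set $F$ and unit vectors $\xi, \eta$, two applications of the Cauchy--Schwarz inequality give
\[
\abs{\inner{\textstyle\sum_{n \in F} A_n T A_n \xi, \eta}} \leq \|T\| \left(\sum_n \|A_n \xi\|^2\right)^{1/2}\left(\sum_n \|A_n \eta\|^2\right)^{1/2} \leq \|T\|,
\]
using $\sum_n \|A_n \xi\|^2 = \inner{\sum_n A_n^2 \xi, \xi} \leq \|\xi\|^2$. Strong convergence on the dense subspace $\rho(C_c(X)) H_X$ is then automatic: for $f \in C_c(X)$ local finiteness forces $A_n \rho(f) = \rho(h_n^{1/2} f)$ to vanish for all but finitely many $n$, so the partial sums are eventually constant on such vectors. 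Uniform boundedness extends convergence to all of $H_X$, yielding a bounded linear map with $\|\trunc(T)\| \leq \|T\|$.

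For item (2), suppose $(y,x) \notin \bigcup_n \overline{\U_n} \times \overline{\U_n}$, so for each $n$ either $y \notin \overline{\U_n}$ or $x \notin \overline{\U_n}$. Since $\{\overline{\U_n}\}$ is also locally finite, choose neighborhoods of $x$ and of $y$ meeting only finitely many $\overline{\U_n}$, and shrink them further so that for each of those finitely many indices, at least one neighborhood misses $\overline{\U_n}$. This produces open $U \ni x$ and $V \ni y$ with the property that, for every $n$, $V \cap \overline{\U_n} = \emptyset$ or $U \cap \overline{\U_n} = \emptyset$. For $f_1 \in C_c(U)$ and $f_2 \in C_c(V)$ the typical term $\rho(f_2) A_n T A_n \rho(f_1) = \rho(f_2 h_n^{1/2}) T \rho(h_n^{1/2} f_1)$ then vanishes, and the strong convergence from item (1) yields $\rho(f_2) \trunc(T) \rho(f_1) = 0$.

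For item (3), the identity $\sum_n A_n^2 = I$ rewrites the error as $T - \trunc(T) = \sum_n A_n [A_n, T]$ (a strong limit). For $f \in C_c(X)$, local finiteness collapses $\rho(f)(T - \trunc(T))$ to a finite sum $\sum_{n \in F} \rho(f) A_n [A_n, T]$, each term of which is compact because $T$ is pseudolocal; the symmetric argument handles composition on the right. Approximating an arbitrary $g \in C_0(X)$ uniformly by $C_c$ functions, together with norm-closedness of the compacts, upgrades this to $T - \trunc(T) \in \cstar_\rho(X)$. Since locally compact operators are automatically pseudolocal, $\trunc(T) = T - (T - \trunc(T))$ inherits pseudolocality from $T$. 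The main obstacle here is bookkeeping rather than ideas: one must carefully justify that strong limits commute with composition by fixed bounded operators and that each finite-sum reduction from local finiteness is legitimate, with item (3) being slightly subtler since one must extract genuine compactness from an a priori strong limit.
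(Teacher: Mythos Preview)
Your proof is correct and is essentially the standard argument one finds in Chapter~12 of Higson--Roe, \emph{Analytic K-Homology}; the paper itself does not give a proof here but simply refers the reader to that source. So there is nothing to compare against except the textbook treatment, and your write-up matches it closely.

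A couple of small points worth tightening. First, in item~(3) the ``symmetric argument'' for right composition is not quite literally the same computation: with the decomposition $T-\trunc(T)=\sum_n A_n[A_n,T]$ the factor $A_n$ sits on the \emph{left}, so multiplying by $\rho(f)$ on the right does not directly truncate the sum. You need either the mirror decomposition $T-\trunc(T)=\sum_n [T,A_n]A_n$, or the observation that $\trunc$ is $*$-preserving so $(T-\trunc(T))^*=T^*-\trunc(T^*)$ and you can take adjoints. Second, your use of pseudolocality to conclude that $[A_n,T]$ is compact tacitly assumes $h_n^{1/2}\in C_0(X)$; this is automatic in every application in the paper (the $\U_n$ are relatively compact), but strictly speaking Definition~\ref{truncation} as stated does not impose it. Neither point affects the validity of your argument in the intended setting.
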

\begin{proof}
See Chapter 12 of \cite{AnalyticKH}.
\end{proof}

The isomorphism \ref{KHControl} can be constructed using truncation operators in two steps.  The first step is to pass from equivariant operators associated to $X$ to ordinary operators on $X_G$.  This is more or less straightforward for operators supported in an open subset of $X$ of the form $\U = \U_G \times G$ where $\U_G$ is an open subset of $X_G$, so we cover $X$ by countably many evenly covered open sets of uniformly bounded diameter, lift to an open cover $\{\U_n\}$ of $X$, and truncate a general $G$-equivariant operator $T$ along this open cover using a $G$-invariant partition of unity.  $\trunc(T)$ descends to a pseudolocal controlled operator associated to $X_G$ and it differs from $T$ by a locally compact $G$-equivariant controlled operator by Proposition \ref{TruncFacts}.  This yields an isomorphism $Q_G^*(X) \cong Q^*(X_G)$.

The second step is to pass from a general pseudolocal operator $T$ associated to $X_G$ to a pseudolocal controlled operator.  Once again, the strategy is to truncate $T$ along a uniformly bounded open cover of $X_G$; Proposition \ref{TruncFacts} guarantees that $\trunc(T)$ is pseudolocal and controlled and that it differs from $T$ by a locally compact operator.  This yields an isomorphism $Q^*(X_G) \cong \qstar(X_G)$.  The fact that $\trunc$ is continuous is used in both steps to handle the fact that a general operator in the coarse algebra or the structure algebra is the norm limit of controlled operators but may not itself be controlled.

\section{Mayer-Vietoris Sequences}

In this section we construct Mayer-Vietoris sequences for K-homology, coarse K-theory, and the analytic structure group using an abstract Mayer-Vietoris sequence for C*-algebra K-theory groups.  We also provide formulas for the Mayer-Vietoris boundary maps; these will be used in the next section to prove the partitioned manifold index theorem.  The Mayer-Vietoris sequence for K-homology follows from standard results in classical algebraic topology and the Mayer-Vietoris sequence for coarse K-theory was described in \cite{CoarseMV}, but the author is unaware of any previous discussion of the Mayer-Vietoris sequence for the analytic structure group.

\subsection{The Abstract Mayer-Vietoris Sequence}

Let $A$ be a C*-algebra and let $I_1$ and $I_2$ be C*-ideals in $A$ such that $I_1 + I_2 = A$.  There is a Mayer-Vietoris sequence in K-theory associated to these data which takes the form:
\begin{equation} \label{KTheoryMV}
\to K_{p+1}(A) \to K_p(I_1 \cap I_2) \to K_p(I_1) \oplus K_p(I_2) \to K_p(A) \to
\end{equation}
This is apparently a folklore result among operator algebraists; it appears in \cite{CoarseMV} but it is probably older.  To construct it, consider the C*-algebra $\Omega(A,I_1,I_2)$ defined to be the set of all continuous paths $f \colon [0,1] \to A$ such that $f(0) \in I_1$ and $f(1) \in I_2$.  There is a short exact sequence
\begin{equation} \label{MVShortExact}
0 \to SA \to \Omega(A,I_2,I_2) \to I_1 \oplus I_2 \to 0
\end{equation}
where $SA$ is the suspension of $A$, the first map is inclusion, and the second map is evaluation at $0$ and $1$.  

\begin{lemma}
The $*$-homomorphism $I_1 \cap I_2 \to \Omega(A,I_2,I_2)$ which sends $a \in I_1 \cap I_2$ to the constant path based at $a$ induces an isomorphism in K-theory.
\end{lemma}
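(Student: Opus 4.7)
The plan is to place the inclusion $\iota \colon I_1 \cap I_2 \to \Omega(A, I_1, I_2)$ into a morphism of short exact sequences and invoke the five lemma.  The key auxiliary object is the \emph{path algebra} $P = \{f \in C([0,1], A) \mid f(0) \in I_1\}$, which contains $\Omega(A, I_1, I_2)$ as the C*-ideal cut out by the additional constraint $f(1) \in I_2$.  Evaluation at $t = 1$ therefore yields a short exact sequence
\begin{equation*}
0 \to \Omega(A, I_1, I_2) \to P \xrightarrow{\mathrm{ev}_1} A/I_2 \to 0.
\end{equation*}
On the other hand, restricting the quotient map $A \to A/I_2$ to $I_1$ and invoking the hypothesis $I_1 + I_2 = A$ via the second isomorphism theorem (so that $I_1/(I_1 \cap I_2) \cong A/I_2$) yields
\begin{equation*}
0 \to I_1 \cap I_2 \to I_1 \to A/I_2 \to 0.
\end{equation*}
The constant-path inclusion defines a morphism between these two sequences whose rightmost component is the identity on $A/I_2$, whose leftmost component is $\iota$, and whose middle component embeds $I_1$ into $P$ as the subalgebra of constant paths.

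The main technical step is to show that the middle map $I_1 \to P$ induces an isomorphism on K-theory.  This is immediate because $P$ deformation retracts onto the constants: evaluation at $0$ is a retraction $P \to I_1$, and the homotopy $f_s(t) = f(st)$ interpolates between the identity on $P$ at $s = 1$ and this retraction at $s = 0$.  Hence the constant-path inclusion and $\mathrm{ev}_0$ induce mutually inverse isomorphisms on K-theory.

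With these pieces in place, applying the five lemma to the induced ladder of six-term K-theory exact sequences --- in which the right vertical map is the identity and the middle vertical map is an isomorphism --- forces $\iota_*$ to be an isomorphism as well.  The only point requiring care is the identification $I_1/(I_1 \cap I_2) \cong A/I_2$, which is precisely where the hypothesis $I_1 + I_2 = A$ enters essentially; without surjectivity of $I_1 \to A/I_2$ the top row above would fail to be short exact and the five-lemma argument would collapse.  Everything else reduces to routine homological algebra.
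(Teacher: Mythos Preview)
Your argument is correct, but it follows a different line from the paper's.  The paper works \emph{inside} $\Omega(A,I_1,I_2)$: it takes the ideal $C[0,1] \otimes (I_1 \cap I_2)$ of constant-value paths, observes that this ideal is homotopy equivalent to $I_1 \cap I_2$, and then shows that the quotient $\Omega(A,I_1,I_2)/(C[0,1]\otimes (I_1\cap I_2))$ has vanishing K-theory.  The hypothesis $A = I_1 + I_2$ enters through the decomposition $A/(I_1\cap I_2) \cong I_1/(I_1\cap I_2) \oplus I_2/(I_1\cap I_2)$, which splits that quotient as a direct sum of two cones.

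You instead embed $\Omega(A,I_1,I_2)$ \emph{outward} into the one-sided path algebra $P = \{f \in C([0,1],A) : f(0)\in I_1\}$, which is contractible onto $I_1$, and then compare the resulting short exact sequence $0 \to \Omega \to P \to A/I_2 \to 0$ with $0 \to I_1\cap I_2 \to I_1 \to A/I_2 \to 0$ via the five lemma; the hypothesis $A = I_1 + I_2$ enters as surjectivity of $I_1 \to A/I_2$.  Your approach trades the identification of the quotient as a sum of cones for an extra ladder of exact sequences and a five-lemma chase; the paper's approach is marginally more direct, while yours makes the role of the contractible path algebra explicit.  Both are standard and equally acceptable.
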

\begin{proof}
The space $C[0,1] \otimes I_1 \cap I_2$ of continuous paths in $I_1 \cap I_2$ is an ideal in $\Omega(A,I_1,I_2)$ which is homotopy equivalent to $I_1 \cap I_2$.  Thus it suffices to show that the inclusion $C[0,1] \otimes I_1 \cap I_2$ in $\Omega(A,I_1,I_2)$ induces an isomorphism on K-theory.  The quotient $Q$ of $\Omega(A,I_1,I_2)$ by this ideal is the C*-algebra of paths $g \colon [0,1] \to A/(I_1 \cap I_2)$ such that $g(0) \in I_1/(I_1 \cap I_2)$ and $g(1) \in I_2/(I_1 \cap I_2)$.  Since
\begin{equation*}
A/(I_1 \cap I_2) \cong I_1/(I_1 \cap I_2) \oplus I_2/(I_1 \cap I_2)
\end{equation*}
(this is where $A = I_1 + I_2$ is used), it follows that
\begin{equation*}
Q \cong \left(C_0[0,1) \otimes I_1/(I_1 \cap I_2)\right) \oplus \left(C_0(0,1] \otimes I_2/(I_1 \cap I_2)\right)
\end{equation*}
Thus $Q$ has trivial K-theory and the proof is complete.
\end{proof}

The Mayer Vietoris sequence \eqref{KTheoryMV} can thus be obtained as the long exact sequence in K-theory associated to the short exact sequence \eqref{MVShortExact}

We conclude by providing formulas for the boundary maps in the Mayer-Vietoris sequence.  These formulas use the following device:

\begin{definition}
Let $A$ be a unital C*-algebra and assume $A = I_1 + I_2$ where $I_1$ and $I_2$ are closed C*-ideals in $A$.  A {\em partition of unity} for this decomposition is a pair $(a_1, a_2)$ where $a_j \in I_j$ are positive elements of $A$ satisfying $a_1^2 + a_2^2 = 1$.
\end{definition}

Partitions of unity exist and are unique up to homotopy by a straightforward functional calculus argument.  Let us use a partition of unity to calculate the Mayer-Vietoris boundary map $\partial_{MV} \colon K_1(A) \to K_0(I_1 \cap I_2)$.  This begins with the following lemma.

\begin{lemma}
Let $\tbf{u} \in M_n(A)$ be a unitary representing a class in $K_1(A)$ and define $\tbf{v} = a_1 \tbf{u} + a_2 \tbf{1}$.  We have:
\begin{itemize}
\item $\tbf{v} \sim \tbf{1}$ modulo $M_n(I_1)$
\item $\tbf{v} \sim \tbf{u}$ modulo $M_n(I_2)$
\item $\tbf{v}$ is a unitary modulo $M_n(I_1 \cap I_2)$
\end{itemize}
\end{lemma}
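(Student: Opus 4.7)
The plan is to interpret each ``$\sim$'' as equality modulo the indicated ideal (consistent with the paper's earlier convention) and verify each bullet by a direct computation in the quotient. The second and third bullets follow by symmetry and straightforward expansion; the first bullet requires one small C*-algebraic observation about positive elements.

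For the first bullet I would pass to the quotient $M_n(A/I_1)$. Since $a_1 \in I_1$, the image of $\tbf{v}$ reduces to $\bar{a}_2 \tbf{1}$, where the bar denotes the image modulo $I_1$. The partition-of-unity identity $a_1^2 + a_2^2 = \tbf{1}$ becomes $\bar{a}_2^{\,2} = 1$, and because $a_2$ is positive in $A$ its image $\bar{a}_2$ is positive in $A/I_1$. The positive square root of $1$ in any unital C*-algebra is uniquely $1$, so $\bar{a}_2 = 1$ and hence $\tbf{v} \equiv \tbf{1} \pmod{M_n(I_1)}$. For the second bullet I would run the mirror image of this argument in $A/I_2$: there $a_2 \equiv 0$ and the same positivity/uniqueness step forces $a_1 \equiv 1$, giving $\tbf{v} \equiv \tbf{u} \pmod{M_n(I_2)}$.

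For the third bullet I would expand directly, using $\tbf{u}^* \tbf{u} = \tbf{1}$ and $a_1^2 + a_2^2 = 1$:
\begin{equation*}
\tbf{v}^* \tbf{v} = a_1^2 \tbf{u}^* \tbf{u} + a_1 a_2 (\tbf{u} + \tbf{u}^*) + a_2^2 \tbf{1} = \tbf{1} + a_1 a_2 (\tbf{u} + \tbf{u}^*).
\end{equation*}
Since $a_1 \in I_1$ and $a_2 \in I_2$, the product $a_1 a_2$ lies in $I_1 \cap I_2$, so the error term lies in $M_n(I_1 \cap I_2)$; the corresponding computation of $\tbf{v} \tbf{v}^*$ is identical.

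The only mildly subtle point, and thus the main obstacle, is in the first bullet: membership $a_1 \in I_1$ alone does not give $a_2 \equiv 1 \pmod{I_1}$, and one must invoke the full strength of the identity $a_1^2 + a_2^2 = 1$ combined with positivity of $a_2$ to promote the quotient relation $\bar{a}_2^{\,2} = 1$ to $\bar{a}_2 = 1$. After this observation the remainder of the argument is purely algebraic.
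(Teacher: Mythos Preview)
Your treatment of the first two bullets is correct and is essentially the paper's argument phrased in quotient language: the paper writes $a_2=\sqrt{1-a_1^2}$ and concludes $a_2\equiv 1$ modulo $I_1$, which is exactly your uniqueness-of-positive-square-root step.

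The third bullet, however, has a genuine gap. Your displayed identity
\[
\tbf{v}^*\tbf{v}=a_1^2\,\tbf{u}^*\tbf{u}+a_1a_2(\tbf{u}+\tbf{u}^*)+a_2^2\,\tbf{1}
\]
is not an equality in $M_n(A)$: it silently assumes that $a_1$ and $a_2$ commute with $\tbf{u}$ and $\tbf{u}^*$. The honest expansion is
\[
\tbf{v}^*\tbf{v}=\tbf{u}^*a_1^2\tbf{u}+\tbf{u}^*a_1a_2+a_2a_1\tbf{u}+a_2^2,
\]
and while the two cross terms lie in $M_n(I_1\cap I_2)$ for the reason you give, the term $\tbf{u}^*a_1^2\tbf{u}$ is \emph{not} equal to $a_1^2$; it differs from it by $\tbf{u}^*[a_1^2,\tbf{u}]$. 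So knowing only that $a_1a_2\in I_1\cap I_2$ is not enough. What is missing is precisely the observation the paper makes explicit: $a_1$ (and likewise $a_2$) commutes with every element of $A$ modulo $I_1\cap I_2$, because $[a_1,x]\in I_1$ (as $a_1\in I_1$) and $[a_1,x]=[a_1-1,x]\in I_2$ (as $a_1\equiv 1$ mod $I_2$ by your second bullet). Once you insert this, your expansion becomes valid modulo $M_n(I_1\cap I_2)$ and the argument goes through; note that this step actually \emph{uses} the first two bullets, so the ``mildly subtle point'' is needed for the third bullet as well, not only the first.
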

\begin{proof}
Since $a_1 \in I_1$ and $a_2 \in I_2$ are positive elements satisfying ${a_1}^2 + {a_2}^2 = 1$, it follows that $a_2 = \sqrt{1 - {a_1}^2}$ and hence $a_2 \sim 1$ modulo $I_1$.  Thus $a_1 \tbf{u} + a_2 \tbf{1} \sim \tbf{1}$ modulo $M_n(I_1)$, and $\tbf{v} \sim \tbf{u}$ modulo $M_n(I_2)$ follows similarly.  To see that $\tbf{v}$ is a unitary modulo $M_n(I_1 \cap I_2)$, we use the fact that $a_1$ and $a_2$ commute with any element of $A$ modulo $I_1 \cap I_2$ since $a_1 \sim a_2 \sim 1$.  Thus:
\begin{align*}
\tbf{v} \tbf{v}^* &= (a_1 \tbf{u} + a_2 \tbf{1})(\tbf{u}^* a_1 + a_2 \tbf{1})\\
&= ({a_1}^2 + {a_2}^2)\tbf{1} + a_1 \tbf{u} a_2 + a_2 \tbf{u}^* a_1\\
&\sim \tbf{1} + a_1 a_2 \tbf{u} + a_1 a_2 \tbf{u}^*\\
&\sim \tbf{1} \text{ modulo $M_n(I_1 \cap I_2)$}
\end{align*}

Similarly:
\begin{align*}
\tbf{v}^*\tbf{v} &= (\tbf{u}^* a_1 + a_2 \tbf{1})(a_1 \tbf{u} + a_2 \tbf{1})\\
&\sim ({a_1}^2 + {a_2}^2)\tbf{1} + \tbf{u}^* a_1 a_2 + a_2 a_1 \tbf{u}\\
&\sim \tbf{1} + a_1 a_2 \tbf{u}^* + a_1 a_2 \tbf{u}\\
&\sim \tbf{1} \text{ modulo $M_n(I_1 \cap I_2)$}
\end{align*}
\end{proof}

Our formula for $\partial_{MV}$ is as follows:

\begin{proposition}
Let $\tbf{u}$ and $\tbf{v}$ be as in the previous lemma.  Then 
$\partial_{MV} [\tbf{u}] = \partial [\tbf{v}]$ where $[\tbf{v}]$ is the class of $\tbf{v}$ in $K_1(A/(I_1 \cap I_2))$ and $\partial$ is the boundary map for the short exact sequence
\begin{equation*}
0 \to I_1 \cap I_2 \to A \to A/(I_1 \cap I_2) \to 0
\end{equation*}
\end{proposition}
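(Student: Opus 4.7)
The plan is to unwind the definition of $\partial_{MV}$ on explicit representatives, then use the element $\tbf{v}$ to produce a common representative that realizes both sides of the desired equality. The three main steps are (i) unwinding $\partial_{MV}$, (ii) constructing a common unitary lift and interpolating path from the partition of unity, and (iii) comparing with the standard formula for $\partial$.

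First, I would explicate $\partial_{MV}$. Tracing through the long exact sequence of the defining short exact sequence $0 \to SA \to \Omega(A, I_1, I_2) \to I_1 \oplus I_2 \to 0$ together with the quasi-isomorphism from the preceding lemma shows that $\partial_{MV}$ is the composition
\[
K_{p+1}(A) \xrightarrow{\ \sim\ } K_p(SA) \xrightarrow{\ i_*\ } K_p(\Omega) \xleftarrow{\ \sim\ } K_p(I_1 \cap I_2),
\]
with the left isomorphism the suspension, $i_*$ induced by $SA \hookrightarrow \Omega$, and the right isomorphism the inverse of the one from the preceding lemma. Thus computing $\partial_{MV}[\tbf{u}]$ amounts to representing the image of $[\tbf{u}]$ explicitly in $K_p(\Omega)$ and then homotoping within $\Omega$ to a constant path in $I_1 \cap I_2$.

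Next, I would exploit the identity $\mathrm{diag}(\tbf{v}, \tbf{v}^*) = a_1 \cdot \mathrm{diag}(\tbf{u}, \tbf{u}^*) + a_2 \cdot I_{2n}$ in $M_{2n}(A)$. This element is already unitary modulo $M_{2n}(I_1 \cap I_2)$ (because $a_1^2 + a_2^2 = 1$ and $a_1 a_2 \in I_1 \cap I_2$), and a small functional-calculus correction produces a genuine unitary lift $\tilde{\tbf{V}} \in M_{2n}(A)$ of $\mathrm{diag}(\tbf{v}, \tbf{v}^*)$. Interpolating the partition of unity to a path $(a_1(t), a_2(t))$ with $(a_1(0), a_2(0)) = (0, 1)$ and $(a_1(1), a_2(1)) = (a_1, a_2)$ yields a continuous family of unitaries $\tilde{\tbf{V}}(t)$ in $M_{2n}(A)$ joining $I_{2n}$ to $\tilde{\tbf{V}}$. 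The standard formula for the boundary map of the ideal short exact sequence then gives
\[
\partial[\tbf{v}] = [\tilde{\tbf{V}} e_{11} \tilde{\tbf{V}}^*] - [e_{11}] \in K_0(I_1 \cap I_2),
\]
where $e_{11} = \mathrm{diag}(I_n, 0)$.

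Finally, I would compute $\partial_{MV}[\tbf{u}]$ using the same family $\tilde{\tbf{V}}(t)$. Concatenating the projection path $t \mapsto \tilde{\tbf{V}}(t) e_{11} \tilde{\tbf{V}}(t)^*$ with its reverse produces a loop of projections in $M_{2n}(A^+)$ based at $e_{11}$, and this loop realizes the suspension of $[\tbf{u}] \in K_1(A)$ in $K_0(SA)$ (its ``monodromy'' is conjugate to $\mathrm{diag}(\tbf{u}, \mathbf{1})$). Under inclusion $SA \hookrightarrow \Omega$ this loop of projections becomes a class in $K_0(\Omega)$; since the endpoints of $\tilde{\tbf{V}}(t) e_{11} \tilde{\tbf{V}}(t)^*$ differ by an element of $M_{2n}(I_1 \cap I_2)$, the lemma's quasi-isomorphism sends this class to $[\tilde{\tbf{V}} e_{11} \tilde{\tbf{V}}^*] - [e_{11}]$ in $K_0(I_1 \cap I_2)$, matching $\partial[\tbf{v}]$.

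The main obstacle is the verification in the last step that the loop arising from $\tilde{\tbf{V}}(t)$ genuinely represents $[\tbf{u}]$ under the suspension isomorphism and that its image in $K_0(\Omega)$ under $i_*$ deforms, within $\Omega$, to the constant path valued at $\tilde{\tbf{V}} e_{11} \tilde{\tbf{V}}^* - e_{11}$ in $I_1 \cap I_2$. This requires care with the normalization of the suspension isomorphism and an explicit homotopy in $\Omega$; the convexity of the interpolation $(a_1(t), a_2(t))$ is what makes the homotopy available.
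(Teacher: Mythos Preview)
The paper does not give a detailed proof here: it says only that the argument ``uses an explicit formula for the K-theory boundary map $K_1(A/(I_1 \cap I_2)) \to K_0(I_1 \cap I_2)$ and some elementary homotopies'' and defers the details to the author's thesis. Your plan---unwind $\partial_{MV}$ as $K_1(A) \cong K_0(SA) \to K_0(\Omega) \xleftarrow{\sim} K_0(I_1 \cap I_2)$ and compare explicit projection representatives built from the partition of unity---is in exactly this spirit, so at the level of strategy there is nothing to contrast.

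There is, however, a concrete error in your step (iii). You assert that concatenating the projection path $t \mapsto \tilde{\tbf{V}}(t)\,e_{11}\,\tilde{\tbf{V}}(t)^*$ with its reverse yields a loop representing the suspension of $[\tbf{u}]$. But any path concatenated with its own reverse is null-homotopic rel endpoints, so that loop represents $0$ in $K_0(SA)$; your parenthetical about its ``monodromy'' being conjugate to $\mathrm{diag}(\tbf{u},\tbf{1})$ cannot hold for this construction. Your interpolation $(a_1(t),a_2(t))$ from $(0,1)$ to $(a_1,a_2)$ only produces a path of projections from $e_{11}$ to $\tilde{\tbf{V}}\,e_{11}\,\tilde{\tbf{V}}^*$, and closing it up nontrivially is exactly the content you are missing. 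Continuing the interpolation linearly to $(1,0)$ does not work either, since $a_1(t)\,\mathrm{diag}(\tbf{u},\tbf{u}^*)+a_2(t)\,I$ is not invertible in general; what is needed is the standard Whitehead rotation connecting $I_{2n}$ to $\mathrm{diag}(\tbf{u},\tbf{u}^*)$, which does give a genuine loop of projections based at $e_{11}$ realizing the suspension class. Once that loop is in hand, your observation that $\tilde{\tbf{V}}\,e_{11}\,\tilde{\tbf{V}}^* - e_{11} \in M_{2n}(I_1 \cap I_2)$ is precisely what lets you identify its image in $K_0(\Omega)$ with $\partial[\tbf{v}]$ via the constant-path isomorphism, and the remaining homotopy in $\Omega$ is elementary. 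You rightly flagged this last step as the main obstacle; the point is that the specific device you proposed for it collapses to zero.
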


The proof uses an explicit formula for the K-theory boundary map $K_1(A/(I_1 \cap I_2)) \to K_0(I_1 \cap I_2)$ and some elementary homotopies.  Further detail is available in \cite{MyThesis}.

There is a similar formula for the Mayer-Vietoris boundary map in the other degree, but in this case we must assume that $A = I_1 + I_2$ admits a partition of unity $(a_1, a_2)$ such that $a_1$ and $a_2$ are projections; this is required to ensure that $a_1 \tbf{p} + a_2$ is a projection if $\tbf{p}$ is a projection.  Partitions of unity of this sort are available in the examples relevant to this paper.

\begin{proposition}
Assume $A = I_1 + I_2$ admits a partition of unity $(a_1, a_2)$ consisting of projections.  Then the Mayer-Vietoris boundary map $\partial_{MV}: K_0(A) \to K_1(I_1 \cap I_2)$ satisfies $\partial_{MV}[\tbf{p}] = \partial[a_1 \tbf{p} + a_2]$ where $\partial$ is the boundary map associated to the short exact sequence
\begin{equation*}
0 \to I_1 \cap I_2 \to A \to A/I_1 \cap I_2 \to 0
\end{equation*}
\end{proposition}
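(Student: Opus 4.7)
The plan is to mirror the proof of the previous proposition. I set $\tbf{w} = a_1 \tbf{p} + a_2 \tbf{1}$ and first verify the analog of the previous lemma: (i) $\tbf{w} \equiv \tbf{1}$ modulo $M_n(I_1)$; (ii) $\tbf{w} \equiv \tbf{p}$ modulo $M_n(I_2)$; and (iii) $\tbf{w}$ is a projection modulo $M_n(I_1 \cap I_2)$. Items (i) and (ii) are immediate from $a_2 = 1 - a_1 \equiv 1$ modulo $I_1$ and $a_1 \equiv 1$ modulo $I_2$. For (iii), I expand $\tbf{w}^2 = a_1 \tbf{p} a_1 \tbf{p} + a_1 \tbf{p} a_2 + a_2 a_1 \tbf{p} + a_2^2$ and use that $a_1, a_2$ commute with every element of $A$ modulo $I_1 \cap I_2$ together with $a_i^2 = a_i$, $a_1 a_2 = 0$, and $\tbf{p}^2 = \tbf{p}$; the projection hypothesis on the $a_i$ is precisely what delivers $\tbf{w}^2 \equiv \tbf{w}$.

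Next I identify both sides of the equality inside $K_1(\Omega(A, I_1, I_2))$. The Mayer-Vietoris boundary $\partial_{MV} \colon K_0(A) \to K_1(I_1 \cap I_2)$ arises from the long exact sequence associated to $0 \to SA \to \Omega \to I_1 \oplus I_2 \to 0$, combined with Bott periodicity $K_0(A) \cong K_1(SA)$ and the isomorphism $K_1(\Omega) \cong K_1(I_1 \cap I_2)$ from the earlier lemma. Unwinding these identifications, $\partial_{MV}[\tbf{p}]$ is represented by the Bott loop $\beta(\tbf{p})(t) = e^{2\pi i t \tbf{p}}$, viewed as a unitary in $U(\Omega^+)$: the endpoint values at $t = 0$ and $t = 1$ are both $\tbf{1}$, which lies in both $I_1^+$ and $I_2^+$. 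On the other side, by the standard exponential-boundary formula, $\partial[\tbf{w}]$ is represented by $e^{2\pi i \tilde{\tbf{w}}} \in U((I_1 \cap I_2)^+)$ for a self-adjoint lift $\tilde{\tbf{w}} = \frac{1}{2}(\tbf{w} + \tbf{w}^*)$, included into $U(\Omega^+)$ as a constant path.

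The remaining step is to construct an explicit homotopy in $U(\Omega^+)$ from $\beta(\tbf{p})$ to the constant path at $e^{2\pi i \tilde{\tbf{w}}}$. I do this in two stages: first deform the exponent via $(s, t) \mapsto e^{2\pi i t ((1-s)\tbf{p} + s \tilde{\tbf{w}})}$, then collapse the resulting loop to the constant path via $(s, t) \mapsto e^{2\pi i ((1-s) t + s) \tilde{\tbf{w}}}$. The main technical point is to verify that both deformations remain in $\Omega^+$ throughout, which reduces to checking the endpoint conditions at $t = 0, 1$. Property (ii) gives $(1-s)\tbf{p} + s \tilde{\tbf{w}} \equiv \tbf{p}$ modulo $I_2$, which keeps the $t = 1$ endpoint in $I_2^+$ across the first stage, while property (i) gives $\tilde{\tbf{w}} \equiv \tbf{1}$ modulo $I_1$, so that $e^{2 \pi i s \tilde{\tbf{w}}}$ lies in $I_1^+$ throughout the second stage. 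Composing the two homotopies then yields the desired identification $\partial_{MV}[\tbf{p}] = \partial[\tbf{w}]$.
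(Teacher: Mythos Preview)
Your approach is more hands-on than the paper's, which simply says the formula ``can be reduced to the formula in the other degree using suspensions and Bott periodicity'' and defers the details to the author's thesis.  Your Stage~1 homotopy is fine and the verification of properties (i)--(iii) for $\tbf{w}$ is correct.

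There is, however, a genuine gap in Stage~2.  Recall that an element of $M_n(\Omega^+)$, realized as a path $g \colon [0,1] \to M_n(A)$, must satisfy $g(0)-\Lambda \in M_n(I_1)$ and $g(1)-\Lambda \in M_n(I_2)$ for a \emph{single} scalar matrix $\Lambda \in M_n(\C)$; the scalar correction at the two endpoints must agree.  For your homotopy $H_2(s)(t)=e^{2\pi i((1-s)t+s)\tilde{\tbf{w}}}$ one has $H_2(s)(0)=e^{2\pi i s\tilde{\tbf{w}}}$ and $H_2(s)(1)=e^{2\pi i\tilde{\tbf{w}}}$.  Property~(i) gives $H_2(s)(0)\equiv e^{2\pi i s}\tbf{1}$ modulo $M_n(I_1)$, forcing $\Lambda=e^{2\pi i s}\tbf{1}$; property~(ii) gives $H_2(s)(1)\equiv\tbf{1}$ modulo $M_n(I_2)$, forcing $\Lambda=\tbf{1}$.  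For $0<s<1$ these are incompatible (unless $1_A$ lies in one of the ideals), so $H_2(s)$ does \emph{not} lie in $U_n(\Omega^+)$.  Your sentence ``$e^{2\pi i s\tilde{\tbf{w}}}$ lies in $I_1^+$'' checks only the $t=0$ endpoint in isolation, which is not sufficient.  (Relatedly, the path $t\mapsto e^{2\pi i t\tilde{\tbf{w}}}$ is not a loop: its endpoints are $\tbf{1}$ and $e^{2\pi i\tilde{\tbf{w}}}$, which differ by an element of $M_n(I_1\cap I_2)$ but are not equal.)

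The idea is salvageable, but you must either construct a more careful homotopy that keeps the scalar parts matched at both endpoints throughout---for instance by multiplying through by a compensating scalar factor and then arguing separately that the resulting scalar loop contributes trivially in $K_1(\Omega)$---or else follow the paper's route and reduce to the $K_1\to K_0$ formula by suspending.
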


This formula can be reduced to the formula in the other degree using suspensions and Bott periodicity.  Again, the details are provided in \cite{MyThesis}.

\subsection{The Mayer-Vietoris and Analytic Surgery Diagram}

Let $X$ be a proper metric space and let $Y_1$ and $Y_2$ be subspaces such that $X = Y_1 \cup Y_2$.  In this section we will associate to $Y_1$ and $Y_2$ ideals $\qstar(Y_1 \subseteq X)$ and $\qstar(Y_2 \subseteq X)$ in $\dstar(X)$ and show that $\dstar(X) = \dstar(Y_1 \subseteq X) + \dstar(Y_2 \subseteq X)$.  Combined with some excision results, the abstract Mayer-Vietoris sequence of the previous section will yield a Mayer-Vietoris sequence in K-homology:
\begin{equation*}
\to K_{p+1}(X) \to K_p(Y_1 \cap Y_2) \to K_p(Y_1) \oplus K_p(Y_2) \to K_p(X) \to
\end{equation*}
We will also carry out similar constructions for the coarse K-theory group and the analytic structure group of $X$ and conclude by showing that the three Mayer-Vietoris sequences can be interwoven with the analytic surgery exact sequence to form the braid diagram which appears in the introduction.

To begin, fix a representation $\rho$ of $C_0(X)$ on a separable Hilbert space $H$ and let $Y \subseteq X$ be a subspace.  Say that an operator $T \in \B(H)$ is {\em locally compact for $X-Y$} if $\rho(f)T \sim T\rho(f) \sim 0$ for every $f \in C_0(X-Y)$, and say that $T$ is {\em supported near $Y$} if $\Supp(T) \subseteq B_R(Y) \times B_R(Y)$ for some $R$-neighborhood $B_R(Y)$.

\begin{definition}
Let $X$ be a proper metric space equipped with a free, proper, and isometric $G$-action and let $Y$ be a $G$-invariant subspace.  
\begin{itemize}
\item Let $\dstar(Y \subseteq X)$ denote the C*-ideal in $\dstar(X)$ consisting of pseudolocal operators which are locally compact for $X-Y$.  Let $\qstar(Y \subseteq X)$ denote the ideal $\dstar(Y \subseteq X)/\cstar(X)$ in $\qstar(X)$.
\item Let $C_G^*(Y \subseteq X)$ denote the C*-ideal in $C_G^*(X)$ obtained by taking the norm closure of the set of all $G$-invariant locally compact controlled operators which are supported near $Y$.
\item Let $D_G^*(Y \subseteq X)$ denote the C*-ideal in $D_G^*(X)$ obtained by taking the norm closure of the set of all $G$-invariant pseudolocal controlled operators which are locally compact for $X-Y$ and supported near $Y$. 
\end{itemize}
\end{definition}

\begin{remark}
As before, we do not need to work equivariantly with the dual algebra.
\end{remark}

Defining $Q_G^*(Y \subseteq X) = D_G^*(Y \subseteq X) / C_G^*(Y \subseteq X)$, there is still an isomorphism
\begin{equation*}
Q_G^*(Y \subseteq X) \cong \qstar(Y_G \subseteq X_G)
\end{equation*}
The proof proceeds exactly as before, only now we build the required truncation operator using a collection of open sets $\{\U_n\}$ in $X$ such that $Y \subseteq \bigcup_n \U_n \subseteq B_R(Y)$ for some $R > 0$.  This ensures that $\trunc(T)$ is supported near $Y$ for any operator $T$.  

An crucial fact about these ideals is that their K-theory depends only on $Y$.

\begin{proposition}
Let $X$ and $Y$ be as in the previous definition and fix an ample representation of $C_0(X)$ on a separable Hilbert space.  Then the inclusion $i \colon Y \into X$ is continuous, coarse, and $G$-invariant, and we have:
\begin{itemize}
\item If $Y$ is closed then $i$ induces an isomorphism 
\begin{equation*}
K_p(\qstar(Y)) \cong K_p(\qstar(Y \subseteq X))
\end{equation*}
\item $i$ induces an isomorphism 
\begin{equation*}
K_p(C_G*(Y)) \cong K_p(C_G^*(Y \subseteq X))
\end{equation*}
\item If $Y$ is closed and the representation is very ample then $i$ induces an isomorphism 
\begin{equation*}
K_p(D_G^*(Y)) \cong K_p(D_G^*(Y \subseteq X))
\end{equation*}
\end{itemize}
\end{proposition}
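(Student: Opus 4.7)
The three statements share a common template: in each case, the inclusion $i\colon Y \into X$ is trivially continuous, coarse, and $G$-equivariant, and the induced map on K-theory is implemented by conjugation by an appropriate $G$-equivariant covering isometry. Specifically, fix an ample (respectively very ample in the third bullet) $(Y,G)$-module $H_Y$ and an ample (respectively very ample) $(X,G)$-module $H_X$, and choose a topological, coarse, or uniform $G$-equivariant covering isometry $V\colon H_Y \to H_X$ for $i$, whose existence is guaranteed by Voiculescu's theorem, chapter 6 of \cite{AnalyticKH}, or Proposition \ref{UniformIsom} respectively. The first task is to verify that $\Ad_V$ carries the $Y$-algebra into the corresponding $(Y \subseteq X)$-ideal. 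In the coarse and uniform settings, $\Supp(V)$ lies within a bounded neighborhood of the graph of $i$, so $\Supp(VTV^*) \subseteq B_R(Y) \times B_R(Y)$; in the pseudolocal settings, $V^* \rho_X(f) V \sim \rho_Y(i^*f) = 0$ for $f \in C_0(X \setminus Y)$, which forces $\rho_X(f)\Ad_V(T) \sim \Ad_V(T)\rho_X(f) \sim 0$, so $\Ad_V(T)$ is locally compact for $X - Y$.

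\textbf{Isomorphism on K-theory.} The K-theory of each $(Y \subseteq X)$-ideal is independent of the choice of ample (or very ample) module on $X$---this is a direct adaptation of the Voiculescu-style absorption arguments of \cite{AnalyticKH}, since the support and local compactness conditions defining these ideals are preserved by the intertwining isometries used in those arguments. I would exploit this flexibility to choose $H_X = H_Y \oplus H_Y'$ where $H_Y'$ is an auxiliary module arranged so that $H_X$ is ample (or very ample) as an $(X,G)$-module; with this choice the covering isometry $V$ can be taken to be the canonical inclusion, and $\Ad_V$ becomes extension by zero. To build the inverse, I would use a truncation argument as in Proposition \ref{TruncFacts}, applied to a locally finite $G$-invariant open cover $\{\U_n\}$ of a neighborhood $B_R(Y)$ concentrated near $Y$ (with each $\U_n$ meeting $Y$). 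Given an operator $T$ in $C_G^*(Y \subseteq X)$ or $D_G^*(Y \subseteq X)$, one checks that $\trunc(T)$ agrees with $T$ modulo $\cstar(X)$ and is supported in $\bigcup_n \overline{\U_n \times \U_n}$. For the K-homology and structure algebra statements, closedness of $Y$ combined with Tietze extension lets one interpret $\trunc(T)$ as an operator on the subspace $H_Y \subseteq H_X$ carrying an ample $C_0(Y)$-module structure, providing the K-theory inverse to $(\Ad_V)_*$.

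\textbf{Main obstacle.} The hardest part is the K-homology statement, where the restriction from $H_X$ to a subspace carrying an ample $C_0(Y)$-representation is not completely natural at the C*-algebraic level (since $\chi_Y$ is not continuous). The key observation I expect to rely on is that for closed $Y$, the sequence $0 \to C_0(X \setminus Y) \to C_0(X) \to C_0(Y) \to 0$ is split exact in the sense needed to recognize the closure of $\rho_X(C_0(Y)) H_X$ modulo the closure of $\rho_X(C_0(X \setminus Y))H_X$ as carrying an ample $(Y,G)$-representation; truncation along an open cover concentrating near $Y$ converts ``locally compact for $X-Y$'' into a genuine restriction, up to $\cstar(X)$. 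For the coarse algebra (second bullet), where the closedness hypothesis is absent, one instead uses that the inclusion $Y \into B_R(Y)$ is a coarse equivalence for each $R > 0$ and that $C_G^*(Y \subseteq X) = \overline{\bigcup_R C_G^*(B_R(Y) \subseteq X)}$, so the argument reduces to the standard functoriality of $C^*_G$ under coarse equivalences together with a direct limit in K-theory.
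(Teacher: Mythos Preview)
Your proposal attempts to prove all three bullets by a uniform direct construction: build a covering isometry, check that $\Ad_V$ lands in the relative ideal, and then manufacture an inverse at the level of K-theory via truncation. The paper takes a completely different route. For the first two bullets it simply cites the literature (Chapter~5 of \cite{AnalyticKH} for $\qstar$, and \cite{CoarseMV} for $C_G^*$), and for the third bullet it does \emph{not} attempt any direct construction at all. Instead it uses the short exact sequence
\[
0 \to C_G^*(Y) \to D_G^*(Y) \to Q_G^*(Y) \to 0
\]
and its relative analogue, together with the identification $Q_G^*(Y \subseteq X) \cong \qstar(Y_G \subseteq X_G)$ already established earlier in the section. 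A uniform covering isometry $V$ (which descends to a topological covering isometry $V_G$ for $Y_G \hookrightarrow X_G$) produces a map between these two short exact sequences; passing to the long exact sequences in K-theory and invoking the first two bullets, four out of every five vertical maps are isomorphisms, so the five lemma gives the third bullet for free.

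The advantage of the paper's argument is that it completely sidesteps the issue you flag as your ``main obstacle'': there is no need to build an explicit K-theory inverse for $D_G^*$, no need to restrict operators to a sub-Hilbert-space carrying an ample $C_0(Y)$-representation, and no delicate analysis of how truncation interacts with the ``locally compact for $X-Y$'' condition. Your approach, if carried through, would give a self-contained proof of all three statements, but the sketch you give for the inverse in the $\qstar$ and $D_G^*$ cases is incomplete---in particular, truncation along a cover of $B_R(Y)$ does not by itself produce an operator on an $H_Y$-module, and the passage from ``locally compact for $X-Y$'' to a genuine restriction needs more than the observation about the split sequence of function algebras. The paper avoids all of this by reducing to the two cases already in the literature.
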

\begin{proof}
For K-homology this is proved in Chapter 5 of \cite{AnalyticKH} and for coarse K-theory this is proved in \cite{CoarseMV}.  For the structure group we proceed as follows.  $i$ is an equivariant uniform map, so it is uniformly covered by an equivariant isometry $V \colon H_Y \to H_X$.  $V$ is in particular an equivariant coarse covering isometry, and following the proof of Proposition \ref{UniformIsom} it is the lift of a topological covering isometry $V_G$ for the inclusion $Y_G \into X_G$.  Thus there is a commutative diagram:
\begin{equation*}
\xymatrix{ 0 \ar[r] & C_G^*(Y) \ar[r] \ar[ddd]^{\Ad(V)} & D_G^*(Y) \ar[r] \ar[ddd]^{\Ad(V)} & D_G^*(Y)/C_G^*(Y) \ar[r] \ar[d]^{\cong} & 0 \\
					          &                                 &                                 & \dstar(Y_G)/\cstar(Y_G) \ar[d]^{\Ad(V_G)} & \\
					          &                                 &                                 & \dstar(Y_G \subseteq X_G)/\cstar(Y_G \subseteq X_G) \ar[d]^{\cong} & \\
					 0 \ar[r] & C_G^*(Y \subseteq X) \ar[r] & D_G^*(Y \subseteq X) \ar[r] & D_G^*(Y \subseteq X)/C_G^*(Y \subseteq x) \ar[r] & 0 }
\end{equation*}
This gives rise to a commutative diagram in K-theory:
{\tiny
\begin{equation*}
\xymatrix{ K_{p+1}(Y_G) \ar[r] \ar[d] & K_{p+1}(C_G^*(Y)) \ar[r] \ar[d] & K_{p+1}(D_G^*(Y)) \ar[r] \ar[d] & K_p(Y_G) \ar[r] \ar[d] & K_p(C_G^*(Y)) \ar[d]\\
					 K_{p+1}(Y_G \subseteq X_G) \ar[r] & K_{p+1}(C_G^*(Y \subseteq X)) \ar[r] & K_{p+1}(D_G^*(Y \subseteq X)) \ar[r] & K_p(Y_G \subseteq X_G) \ar[r] & K_p(C_G^*(Y \subseteq X)) }
\end{equation*}
}
We have proven that all of the vertical maps except the middle one are isomorphisms, so the middle map is an isomorphism by the five lemma.
\end{proof}

We are now ready to construct the desired Mayer-Vietoris sequences associated to a ($G$-invariant) decomposition $X = Y_1 \cup Y_2$.  Let $P_1$ and $P_2$ denote the bounded Hilbert space operators corresponding to bounded Borel functions $f_1$ and $f_2$ which satisfy $f_1 + f_2 = 1$, $f_1|_{X-Y_1} = 0$, and $f_2|_{X-Y_2} = 0$ (such functions exist since $X-Y_1$ and $X-Y_2$ are disjoint open sets).  $P_1$ is a $G$-invariant locally compact controlled operator which is supported near $Y_1$, so in particular $P_1$ is in $\dstar(Y_1 \subseteq X)$, $C_G^*(Y_1 \subseteq X)$, and $D_G^*(Y_1 \subseteq X)$ (and similarly for $P_2$).  Moreover $P_1 + P_2 = 1$, so we have shown that:
\begin{itemize}
\item $\qstar(X) = \qstar(Y_1 \subseteq X) + \qstar(Y_2 \subseteq X)$
\item $C_G^*(X) = C_G^*(Y_1 \subseteq X) + C_G^*(Y_2 \subseteq X)$
\item $D_G^*(X) = D_G^*(Y_1 \subseteq X) + D_G^*(Y_2 \subseteq X)$
\end{itemize}

This is the input required to form the abstract Mayer-Vietoris sequence \eqref{KTheoryMV}, but to obtain a Mayer-Vietoris sequence which depends only on the geometry of the decomposition $X = Y_1 \cup Y_2$ we must check that the intersection of the ideals corresponding to $Y_1$ and $Y_2$ agrees with the ideal corresponding to $Y_1 \cap Y_2$.  This requires certain excision conditions which must be formulated seperately for the dual algebra, the coarse algebra, and the structure algebra.

For the coarse algebra and structure algebra the key condition is that $Y_1$ and $Y_2$ form an {\em $\omega$-excisive pair}, meaning for every $R > 0$ there exists $S > 0$ such that
\begin{equation*}
B_R(Y_1) \cap B_R(Y_2) \subseteq B_S(Y_1 \cap Y_2)
\end{equation*}
For instance if $X = \R$ with the standard metric, $Y_1 = (-\infty,0]$, and $Y_2 = [0,\infty)$ then $Y_1$ and $Y_2$ form a coarsely excisive pair.  If on the other hand we equip $\R$ with a metric obtained by embedding it into $\R^2$ in such a way that the two rays remain within a bounded distance of each other then the decomposition is not $\omega$-excisive.

\begin{proposition} \label{excision}
Let $X = Y_1 \cup Y_2$ be a $G$-invariant decomposition.
\begin{itemize}
\item If $Y_1$ and $Y_2$ are closed then
\begin{equation*}
\dstar(Y_1 \subseteq X) \cap \dstar(Y_2 \subseteq X) = \dstar(Y_1 \cap Y_2 \subseteq X)
\end{equation*}
\item If $Y_1$ and $Y_2$ form an $\omega$-excisive pair then:
\begin{equation*}
C_G^*(Y_1 \subseteq X) \cap C_G^*(Y_2 \subseteq X) = C_G^*(Y_1 \cap Y_2 \subseteq X)
\end{equation*}
\item If $Y_1$ and $Y_2$ are closed and form an $\omega$-excisive pair then
\begin{equation*}
D_G^*(Y_1 \subseteq X) \cap D_G^*(Y_2 \subseteq X) = D_G^*(Y_1 \cap Y_2 \subseteq X)
\end{equation*}
\end{itemize}
\end{proposition}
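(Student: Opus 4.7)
The $\supseteq$ inclusion is immediate in all three cases: since $Y_1 \cap Y_2 \subseteq Y_i$, we have $B_R(Y_1 \cap Y_2) \subseteq B_R(Y_i)$ for every $R > 0$, and $C_0(X - Y_i) \subseteq C_0(X - (Y_1 \cap Y_2))$, so the ``supported near $Y_1 \cap Y_2$'' and ``locally compact for $X - (Y_1 \cap Y_2)$'' conditions each pass down to the analogous conditions for the $Y_i$. The content of the proposition is the reverse inclusion.

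For part (1), my plan is a partition-of-unity argument on test functions. Because $Y_1$ and $Y_2$ are closed, $X - (Y_1 \cap Y_2) = (X - Y_1) \cup (X - Y_2)$ is a union of two open sets, and any $f \in C_c(X - (Y_1 \cap Y_2))$ splits as $f = f_1 + f_2$ with $f_i \in C_c(X - Y_i)$ via a subordinate partition of unity on $\mathrm{supp}(f)$; such $f$ are norm-dense in $C_0(X - (Y_1 \cap Y_2))$. For $T \in \dstar(Y_1 \subseteq X) \cap \dstar(Y_2 \subseteq X)$ this yields $\rho(f) T \sim \rho(f_1) T + \rho(f_2) T \sim 0$ and symmetrically on the right, placing $T$ in $\dstar(Y_1 \cap Y_2 \subseteq X)$.

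For part (2), the difficulty is that elements of $C_G^*(Y_i \subseteq X)$ are only norm limits of controlled operators supported near $Y_i$, not themselves controlled. Given $T$ in the intersection and $\delta > 0$, I would pick for each $i$ an approximant $T^{(i)}$ with $\|T - T^{(i)}\| < \delta$ that is $G$-equivariant, locally compact, controlled, and supported in $B_{R_i}(Y_i)^2$. Let $\chi$ be a $G$-invariant continuous cutoff equal to $1$ on $B_{R_2}(Y_2)$ and supported in $B_{R_2 + 1}(Y_2)$, and define $T' = \rho(\chi) T^{(1)} \rho(\chi)$. Then $T'$ is $G$-invariant, locally compact, and controlled, with support contained in $(B_{R_1}(Y_1) \cap B_{R_2 + 1}(Y_2))^2 \subseteq B_S(Y_1 \cap Y_2)^2$ by the $\omega$-excisive hypothesis. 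The identity $\rho(\chi) T^{(2)} \rho(\chi) = T^{(2)}$ (valid because $\chi = 1$ on the projection of $\Supp(T^{(2)})$) then gives $\|T - T'\| \leq \|T - T^{(2)}\| + \|\rho(\chi)(T^{(2)} - T^{(1)})\rho(\chi)\| \leq 3\delta$, so $T \in C_G^*(Y_1 \cap Y_2 \subseteq X)$ as $\delta \to 0$.

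For part (3), I would combine both strategies. The partition-of-unity argument of part (1), applied to $T$ itself (the local-compactness conditions being closed under norm limits), already shows that $T$ is locally compact for $X - (Y_1 \cap Y_2)$. The cutoff construction $T' = \rho(\chi) T^{(1)} \rho(\chi)$ inherits pseudolocality from $T^{(1)}$ (since $\rho(\chi)$ commutes exactly with $\rho(f)$), remains $G$-invariant, controlled, and supported near $Y_1 \cap Y_2$, and still satisfies $\|T - T'\| = O(\delta)$. The main technical obstacle is that $T'$ is a priori only locally compact for $X - Y_1$, not for $X - (Y_1 \cap Y_2)$. The plan for this is to test $\rho(f) T'$ against $f \in C_0(X - (Y_1 \cap Y_2))$ after splitting $f = f_1 + f_2$ via the partition of unity from part (1): the $f_1$-piece $\rho(f_1 \chi) T^{(1)} \rho(\chi)$ is genuinely compact by local compactness of $T^{(1)}$ for $X - Y_1$, while the $f_2$-piece $\rho(f_2 \chi) T^{(1)} \rho(\chi)$ differs in norm by $O(\delta)$ from $\rho(f_2 \chi) T \rho(\chi)$, which is compact because $T$ is locally compact for $X - Y_2$. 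Closing the argument requires absorbing this $O(\delta)$ error, most likely by iterating the construction so that the approximants genuinely lie in $D_G^*(Y_1 \cap Y_2 \subseteq X)$ while remaining norm-close to $T$; this delicate interplay between the two excision hypotheses is the main technical content of the proof.
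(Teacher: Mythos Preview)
Your arguments for parts (1) and (2) are correct. For (1) you can simplify: since $X = Y_1 \cup Y_2$, the open sets $X - Y_1$ and $X - Y_2$ are actually \emph{disjoint}, so any $f \in C_0(X - (Y_1 \cap Y_2))$ splits as $f = f\,\chi_{X-Y_1} + f\,\chi_{X-Y_2}$ with each piece continuous, and no partition of unity is needed. For (2) the paper simply cites \cite{CoarseMV}; your cutoff argument is the standard one and is fine.

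Part (3), however, has a genuine gap that ``iterating'' will not close. Your approximant $T' = \rho(\chi) T^{(1)} \rho(\chi)$ must be a \emph{generator} of $D_G^*(Y_1 \cap Y_2 \subseteq X)$, i.e.\ it must be locally compact for $X - (Y_1 \cap Y_2)$ exactly, not merely up to $O(\delta)$. Your estimate shows only that $\rho(f_2) T'$ lies within $O(\delta\norm{f_2})$ of the compacts; since $T'$ itself depends on $\delta$, letting $\delta \to 0$ says nothing about any fixed operator. Repeating the construction produces a new approximant with exactly the same defect --- there is no contraction mechanism forcing the sequence into the generator set.

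The paper's route is structurally different and sidesteps this entirely: it deduces (3) from (1) and (2) using the quotient. Given $T$ in the intersection, pass to its image in $Q_G^*(X) \cong \qstar(X_G)$; this image lies in $\qstar((Y_1)_G \subseteq X_G) \cap \qstar((Y_2)_G \subseteq X_G)$, which equals $\qstar((Y_1 \cap Y_2)_G \subseteq X_G)$ by (1). Lifting via the relative isomorphism $Q_G^*(Y_1 \cap Y_2 \subseteq X) \cong \qstar((Y_1 \cap Y_2)_G \subseteq X_G)$, there is $S \in D_G^*(Y_1 \cap Y_2 \subseteq X)$ with $T - S \in C_G^*(X)$. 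Since $T - S$ also lies in each $D_G^*(Y_i \subseteq X)$, a cutoff argument identical in spirit to your part (2) shows $T - S \in C_G^*(Y_i \subseteq X)$ for each $i$; then (2) itself gives $T - S \in C_G^*(Y_1 \cap Y_2 \subseteq X) \subseteq D_G^*(Y_1 \cap Y_2 \subseteq X)$, and hence $T = S + (T-S)$ lies where it should. The point is that the ``locally compact away from $Y_1 \cap Y_2$'' condition is handled once and for all by (1) at the level of the quotient, so the approximation argument only has to manage the support condition, exactly as in (2).
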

\begin{proof}
The proof of the identity for the coarse algebra appears in \cite{CoarseMV}, and the identity for the structure algebra follows by combining the identity for the dual algebra with the identity for the coarse algebra.  So we need only prove the identity for the dual algebra.

The containment ``$\subseteq$'' is trivial since $C_0(X - Y_i) \subseteq C_0(X - Y_1 \cap Y_2)$, so that an operator which is locally compact for $X - Y_1 \cap Y_2$ is automatically locally compact for $X - Y_1$ and $X - Y_2$.  To prove ``$\supseteq$'', begin by observing that $X - Y_1 \cap Y_2$ is the disjoint union of $X - Y_1$ and $X - Y_2$.  So given any $f \in C_0(X - Y_1 \cap Y_2)$ we have $f = f_1 + f_2$ where $f_i = f|_{Y_i}$.  For any $T \in \dstar(Y_1 \subseteq X) \cap \dstar(Y_2 \subseteq X)$ we have that $T f_1 \sim f_1 T \sim T f_2 \sim f_2 T \sim 0$, so it follows that $T (f_1 + f_2) \sim (f_1 + f_2) T \sim 0$.
\end{proof}

Now, let $\boldsymbol{\Omega}(C_G^*,X,Y_1,Y_2)$ denote the short exact sequence $\eqref{MVShortExact}$ whose long exact sequence in K-theory is the Mayer-Vietoris sequence for the decomposition $C_G^*(X) = C_G^*(Y_1 \subseteq X) + C_G^*(Y_2 \subseteq X)$, and define $\boldsymbol{\Omega}(D_G^*,X,Y_1,Y_2)$ and $\boldsymbol{\Omega}(Q_G^*,X,Y_1,Y_2)$ similarly.  There is a complex
\begin{equation*}
0 \to \boldsymbol{\Omega}(C_G^*,X,Y_1,Y_2) \to \boldsymbol{\Omega}(D_G^*,X,Y_1,Y_2) \to \boldsymbol{\Omega}(Q_G^*,X,Y_1,Y_2) \to 0
\end{equation*}
Passing to K-theory the columns induce Mayer-Vietoris sequences and the rows induce analytic surgery exact sequences, so by the naturality properties of K-theory we obtain:

{\tiny
\begin{equation*}
\xymatrix{
K_p((Y_1)_G) \oplus K_p((Y_2)_G) \ar@/^1pc/[r] \ar[dr] & K_p(C_G^*(Y_1)) \oplus K_p(C_G^*(Y_2)) \ar@/^1pc/[r] \ar[dr] & K_p(C_G^*(X)) \ar@/^1pc/[r] \ar[dr] & S_{p-1}(X,G) \\
K_p(C_G^*(Y_1 \cap Y_2)) \ar[ur] \ar[dr] & K_p(X_G) \ar[ur] \ar[dr] & S_{p-1}(Y_1,G) \oplus S_{p-1}(Y_2,G) \ar[ur] \ar[dr] & K_{p-1}(C_G^*(Y_1 \cap Y_2)) \\
S_p(X,G) \ar[ur] \ar@/_1pc/[r] & S_{p-1}(Y_1 \cap Y_2, G) \ar[ur] \ar@/_1pc/[r] & K_{p-1}((Y_1 \cap Y_2)_G) \ar[ur] \ar@/_1pc/[r] & K_{p-1}((Y_1)_G) \oplus K_{p-1}((Y_2)_G) 
}
\end{equation*}
}

This is the {\em Mayer-Vietoris and analytic surgery diagram}.

\section{Partitioned Manifolds}

In this section we use the Mayer-Vietoris and analytic surgery diagram to reprove and generalize Roe's index theorem for partitioned manifolds.  Before formulating Roe's theorem, let us take a moment to briefly review the relationship between index theory and the assembly map described in this paper.

\subsection{Index Theory and Assembly}

Let $M$ be a Riemannian manifold and let $D$ be a first order differential operator acting on smooth sections of a vector bundle $S \to M$.  Recall that the {\em symbol} of $D$ is the bundle map $\sigma_D: T^*M \to \End(S)$ obtained by freezing the coefficients of $D$ and passing to the Fourier transform of its top order part.  $D$ is said to be {\em elliptic} if its symbol is invertible away from the zero section, and $D$ has {\em finite propagation speed} if the restriction of $\sigma_D$ to the unit sphere bundle in $T^*M$ is uniformly bounded in norm.  

\begin{definition}
A {\em Dirac-type operator} on a Riemannian manifold $M$ is a first order, symmetric, elliptic differential operator acting on smooth sections of a vector bundle $S \to M$ which has finite propagation speed.
\end{definition}

Other authors sometimes define Dirac-type operators more narrowly than we have here. 

A bundle $S \to M$ is said to be {\em graded} if it comes equipped with a decomposition $S = S^+ \oplus S^-$ into subbundles.  $S$ is $p$-multigraded, $p \geq 1$, if it is graded and it comes equipped with $p$ odd-graded anti-commuting unitary operators $\eps_1, \ldots, \eps_p$ such that $\eps_p^2 = -1$.  Conventionally, a $-1$-multigraded bundle is defined to be an ungraded bundle and a $0$-multigraded bundle is simply a graded bundle.  We say that a differential operator $D$ on $S$ is graded if it sends smooth sections of $S^+$ to smooth sections of $S^-$ and vice-versa, and it is $p$-multigraded if additionally it commutes with all of the multigrading operators.

Any $p$-multigraded Dirac-type operator $D$ on a complete Riemannian manifold $M$ determines a class $[D]$ in the degree $p$ K-homology group $K_p(M)$; see chapter 10 of \cite{AnalyticKH} for the details.  The standard example of a $p$-multigraded Dirac-type operator is the spinor Dirac operator on a $p$-dimensional spin- or spin$^c$-manifold.  If $M$ is a compact manifold then any graded Dirac-type operator $D$ on $M$ is Fredholm, and the Fredholm index of $D$ depends only on its K-homology class.  Indeed, there is a group homomorphism
\begin{equation*}
K_0(M) \to \Z
\end{equation*}
which sends the K-homology class of $D$ to its Fredholm index.  In fact, this map is a special case of the coarse assembly map: since $M$ is compact it is coarsely equivalent to a point, and the coarse algebra of a point is simply the C*-algebra of compact operators.  Thus the coarse assembly map in degree $0$ is simply a map $K_0(M) \to K_0(\K) \cong \Z$, and this turns out to be the index map described above.

The coarse assembly map can be used to construct more general index maps.  Let $M$ once again be a compact Riemannian manifold, let $G$ be its fundamental group, and let $\wt{M}$ be its universal cover.  Since $M$ is compact, $\wt{M}$ is coarsely equivalent to $G$ (regarded as a metric space via any choice of word metric) and one can show that $C_G^*(\wt{M})$ is isomorphic to the {\em reduced group C*-algebra} $C_r^*(G)$ of $G$ (see \cite{AnalyticKH}, chapter 12).  Thus the coarse index map in this case is a map
\begin{equation*}
K_p(M) \to K_p(C_r^*(G))
\end{equation*}
often called the {\em higher index map} or sometimes the {\em equivariant index map}.

\subsection{The Partitioned Manifold Index Theorem}

We are now ready to formulate and prove the partitioned manifold index theorem.

\begin{definition}
Let $M$ be a smooth manifold and let $N$ be a submanifold of codimension $1$.  $M$ is {\em partitioned by $N$} if $M$ is the union of two submanifolds $M^+$ and $M^-$ with common boundary $N$.
\end{definition}

Assume now that $M$ is a complete Riemannian manifold (in particular a proper metric space) and that $N$ is compact.  Let $G$ be a countable discrete group and let $\wt{M} \to M$ be a locally isometric $G$-cover of $M$.  Define a map $M \to \R$ by the formula $x \mapsto \text{dist}(x,N)$; this is a coarse map since $M$ is a length metric space and it lifts to a $G$-invariant coarse map $\wt{M} \to \R \times G$.  Thus it induces a homomorphism $K_p(C_G^*(M)) \to K_p(C_G^*(\R \times G))$.  According to standard results in coarse K-theory (\cite{AnalyticKH}, chapter 6),
\begin{equation*}
K_p(C_G^*([0,\infty) \times G)) = 0
\end{equation*}
and thus the boundary map in the Mayer-Vietoris sequence for coarse K-theory associated to the decomposition $\R \times G = (-\infty, 0] \times G \cup [0,\infty) \times G$ is an isomorphism:
\begin{equation*}
K_p(C_G^*(\R \times G)) \cong K_{p-1}(C_r^*(G))
\end{equation*}

This allows us to construct an index map associated to a partitioned manifold.

\begin{definition}
Let $M$ be a complete Riemannian manifold partitioned by a compact hypersurface $N$ and let $\wt{M}$ be a $G$-cover of $M$ where $G$ is a countable discrete group.  The {\em partitioned index map} is the composition 
\begin{equation*}
\text{Ind}_{M,N}^G \colon K_p(M) \to K_p(C_G^*(\R \times \wt{M})) \cong K_{p-1}(C_r^*(G))
\end{equation*}
\end{definition}

The partitioned manifold index theorem relates the partitioned index of a Dirac-type operator $D$ on $M$ to the ordinary (equivariant) index of its restriction to $N$.  For this to be possible we must make certain assumptions about the local structure of $D$ near $N$:

\begin{definition} \label{PartitionedOperator}
Let $M$ be a smooth manifold partitioned by a hypersurface $N$, let $S_M \to M$ be a smooth $p$-multigraded vector bundle over $M$, and let $S_N \to N$ be a smooth $(p-1)$-multigraded vector bundle over $N$.  Let $D_M$ and $D_N$ be $p$- and $(p-1)$-multigraded differential operators acting on smooth sections of $S_M$ and $S_N$, respectively.  Say that $D_M$ is {\em partitioned by $D_N$} if there is a collaring neighborhood $U \cong (-1,1) \times N$ of $N$ in $M$ with the following properties:
\begin{itemize}
\item $S_M |_U \cong S_{(-1,1)} \hat{\otimes} S_N$ where $S_{(-1,1)}$ is the standard $1$-multigraded complex spinor bundle over $(-1,1)$.
\item $D_M = D_{(-1,1)} \hat{\otimes} 1 + 1 \hat{\otimes} D_N$ where $D_{(-1,1)}$ is the complex spinor Dirac operator on $S_{(-1,1)}$.
\end{itemize}
\end{definition} 

We are now ready to state the main theorem:

\begin{theorem} [The Partitioned Manifold Index Theorem] \label{PMIT}
Let $M$ be a complete Riemannian manifold and let $\wt{M}$ be a locally isometric $G$-cover of $M$ where $G$ is a countable discrete group.  Suppose $M$ is partitioned by a compact hypersurface $N$ and $\wt{M}$ is partitioned by the lift $\wt{N}$ of $N$.  If $D_M$ is a $p$-multigraded Dirac-type operator on $M$ which is partitioned by a $(p-1)$-multigraded Dirac-type operator $D_N$ on $N$ then
\begin{equation*}
\text{Ind}_{M,N}^G[D_M] = \text{Ind}_N^G[D_N]
\end{equation*}
in $K_{p-1}(C_r^*(G))$.
\end{theorem}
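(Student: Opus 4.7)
The plan is to apply the Mayer-Vietoris and analytic surgery diagram to the $G$-invariant decomposition $\wt{M} = \wt{M}^+ \cup \wt{M}^-$, whose intersection is $\wt{N}$. The halves are closed, and the collar $U \cong (-1,1) \times N$ from Definition~\ref{PartitionedOperator} supplies the $\omega$-excisiveness required by Proposition~\ref{excision}. I first reinterpret the partitioned index in these terms: the coarse map $d \colon \wt{M} \to \R \times G$ sends $\wt{M}^+$ into $[0,\infty) \times G$ and $\wt{M}^-$ into $(-\infty,0] \times G$, and since halfspaces have vanishing coarse K-theory, the Mayer-Vietoris boundary $K_p(C_G^*(\R \times G)) \to K_{p-1}(C_r^*(G))$ is the very isomorphism built into the definition of $\text{Ind}_{M,N}^G$. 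Naturality of the Mayer-Vietoris boundary under $d$, together with the fact that $d$ restricts to a coarse equivalence $\wt{N} \to \{0\} \times G$, then yields
\begin{equation*}
\text{Ind}_{M,N}^G[D_M] \;=\; \partial_{MV}\bigl(\mu_M[D_M]\bigr) \;\in\; K_{p-1}\bigl(C_G^*(\wt{N})\bigr) \cong K_{p-1}(C_r^*(G)),
\end{equation*}
where $\mu_M$ is the coarse assembly and $\partial_{MV}$ is the Mayer-Vietoris boundary for $C_G^*(\wt{M}) = C_G^*(\wt{M}^+ \subseteq \wt{M}) + C_G^*(\wt{M}^- \subseteq \wt{M})$.

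I would then invoke the naturality of assembly under the Mayer-Vietoris boundary. This is precisely one of the commutative squares of the Mayer-Vietoris and analytic surgery braid, obtained by applying K-theory to the short exact sequence
\begin{equation*}
0 \to \boldsymbol{\Omega}(C_G^*,\wt{M},\wt{M}^+,\wt{M}^-) \to \boldsymbol{\Omega}(D_G^*,\wt{M},\wt{M}^+,\wt{M}^-) \to \boldsymbol{\Omega}(Q_G^*,\wt{M},\wt{M}^+,\wt{M}^-) \to 0
\end{equation*}
constructed in Section~3. It gives $\partial_{MV} \circ \mu_M = \mu_N \circ \partial_{MV}$, where on the left $\partial_{MV}$ is the coarse boundary and on the right it is the K-homology boundary. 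Combined with the previous step, and using that on the compact manifold $N$ the equivariant index map is literally the coarse assembly $\mu_N$, the theorem reduces to the purely K-homological identity
\begin{equation*}
\partial_{MV}[D_M] \;=\; [D_N] \;\in\; K_{p-1}(N).
\end{equation*}

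For this last step I would use the explicit partition-of-unity formula for the Mayer-Vietoris boundary proved in Section~3. Choose a partition of unity $(a_1,a_2)$ for $\qstar(M) = \qstar(M^+ \subseteq M) + \qstar(M^- \subseteq M)$ concentrated on the collar coordinate of $N$, and take $\tbf{u} = \chi(D_M)$ for an odd normalizing function $\chi$; the formula then represents $\partial_{MV}[D_M]$ by the K-theory connecting class of $a_1 \chi(D_M) + a_2$ along
\begin{equation*}
0 \to \qstar(N \subseteq M) \to \qstar(M) \to \qstar(M)/\qstar(N \subseteq M) \to 0.
\end{equation*}
The product decomposition $D_M = D_{(-1,1)} \hat{\otimes} 1 + 1 \hat{\otimes} D_N$ controls $\chi(D_M)$ on the collar via functional calculus, and a direct computation—parallel to the classical identification of the Bott generator with the spectral flow of a normal Dirac operator—collapses the connecting class onto $[\chi(D_N)] = [D_N]$.

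The main obstacle will be this final functional-calculus identification. The conceptual picture is clear—the Mayer-Vietoris boundary of $[D_M]$ selects the spectral information of $D_M$ concentrated near $N$, which by the product structure is exactly $[D_N]$—but executing it uniformly across all parities of $p$ demands careful bookkeeping of the multigrading operators $\eps_1,\ldots,\eps_p$ and of the distinction between projection and unitary representatives. A cleaner route may be to verify the identity first in the model case $M = \R \times N$ with $D_M = D_\R \hat{\otimes} 1 + 1 \hat{\otimes} D_N$, where the computation reduces to a Bott-periodicity / Kasparov-product statement that is essentially explicit, and then to deduce the general case by an excision argument in K-homology that replaces a tubular neighborhood of $N$ in $M$ by the model collar.
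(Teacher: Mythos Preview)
Your overall architecture coincides with the paper's: reduce $\text{Ind}_{M,N}^G$ to $\partial_{MV}\circ\mu_M$, use the commuting square in the Mayer-Vietoris and analytic surgery braid to swap this for $\mu_N\circ\partial_{MV}$, and then prove the K-homological identity $\partial_{MV}[D_M]=[D_N]$.

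The only genuine difference is in how you propose to establish that last identity. You plan a direct functional-calculus attack on $a_1\chi(D_M)+a_2$ (or a model-case computation on $\R\times N$ followed by excision), and you correctly flag the multigrading bookkeeping as the sticking point. The paper sidesteps all of this. It chooses the partition of unity to be the \emph{characteristic functions} $\varphi^\pm$ of $M^\pm$; these are projections, so the explicit boundary formula applies in both parities without case analysis. Then, rather than computing with $\varphi^+F+\varphi^-$ directly, the paper invokes the K-homology excision isomorphism
\[
K_{1-p}\bigl(\dstar(M)/\dstar(N\subseteq M)\bigr)\;\cong\;K_p(M^- \setminus N)\oplus K_p(M^+\setminus N),
\]
under which $[\varphi^+F+\varphi^-]$ becomes the restricted class of $D_M$ on $M^+\setminus N$. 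The Mayer-Vietoris boundary is thereby identified with the ordinary K-homology boundary $\partial\colon K_p(M^+\setminus N)\to K_{p-1}(N)$, and the result follows from the standard ``boundary of Dirac is Dirac'' theorem (Higson--Roe, Chapter~11), with the product hypothesis on $D_M$ near $N$ supplying exactly the input that theorem needs. This buys you a two-line proof of the step you identified as the main obstacle, and it handles all $p$ uniformly.
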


Earlier proofs of this theorem, such as in \cite{DualToeplitz} or \cite{CobordismInv}, deal only with the case where $G$ is the trivial group.  Additionally, earlier arguments gave little insight into the where $N$ is non-compact; the proof here uses the compactness hypothesis only to relate the coarse assembly map to the more familiar index map $\text{Ind}_N^G$.  There are a variety of tools for calculating this; if $G$ is trivial then $\text{Ind}_N$ can be calculated explicitly in terms of characteristic classes on $N$ using the Atiyah-Singer index theorem, and for nontrivial $G$ $\text{Ind}_N^G$ can be calculated in many examples using representation theory and equivariant K-theory.

The strategy of the proof is to fit the relevant index maps into a commutative diagram with the boundary map in the Mayer-Vietoris sequence for K-homology:
\begin{equation*}
\xymatrix{
K_p(M) \ar[dr]^{\text{Ind}_{M,N}^G} \ar[dd]_{\partial_{MV}} \\
& K_{p-1}(C_r^*(G)) \\
K_{p-1}(N) \ar[ur]_{\text{Ind}_N^G}
}
\end{equation*}

The result would then follow if we showed that $\partial_{MV}$ sends the K-homology class of $D_M$ to the K-homology class of $D_N$.  It is here that the technical assumptions on $D_M$ in the statement of Theorem \ref{PMIT} play a crucial role.  This is also where our calculations with Mayer-Vietoris boundary maps enter into the proof.

\begin{lemma} \label{MVDirac}
In the setting of Theorem \ref{PMIT} we have
\begin{equation*}
\partial_{MV}[D_M] = [D_N]
\end{equation*}
where $[D_M]$ is the class in $K_p(M)$ determined by $D_M$, $[D_N]$ is the class in $K_{p-1}(N)$ determined by $D_N$, and
\begin{equation*}
\partial_{MV} \colon K_p(M) \to K_{p-1}(M^+ \cap M^-) = K_{p-1}(N)
\end{equation*}
is the boundary map in the Mayer-Vietoris sequence in K-homology associated to the topologically excisive decomposition $M = M^+ \cup M^-$ determined by the partitioning of $M$.
\end{lemma}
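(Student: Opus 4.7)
The plan is to apply the explicit boundary-map formula from the previous subsection to a functional-calculus representative of $[D_M]$, and to exploit the product form of $D_M$ inside the collaring neighborhood $U \cong (-1,1) \times N$ to reduce the computation to a one-dimensional base case.

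\emph{Setup and application of the formula.} Represent $[D_M]$ by $\chi(D_M) \in M_n(\qstar(M))$, where $\chi$ is a chopping function whose Fourier transform is supported in a small interval $[-\delta,\delta]$ with $2\delta$ less than the width of the collar. Finite propagation speed of $D_M$ guarantees that $\chi(D_M)$ is controlled, and that its behavior inside $U$ is governed entirely by the product decomposition $D_M|_U = D_{(-1,1)} \hat{\otimes} 1 + 1 \hat{\otimes} D_N$. Choose smooth functions $a_\pm \colon M \to [0,1]$ depending only on the $(-1,1)$-coordinate in $U$, constant outside $U$, with $a_+^2 + a_-^2 = 1$ and $\mathrm{supp}(a_\pm) \subseteq M^\pm$. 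Then $(\rho(a_+),\rho(a_-))$ is a partition of unity for $\qstar(M) = \qstar(M^+ \subseteq M) + \qstar(M^- \subseteq M)$, and the proposition of the previous subsection yields
\begin{equation*}
\partial_{MV}[\chi(D_M)] = \partial\bigl[\rho(a_+)\chi(D_M) + \rho(a_-)\bigr],
\end{equation*}
where $\partial$ is the boundary of $0 \to \qstar(N \subseteq M) \to \qstar(M) \to \qstar(M)/\qstar(N\subseteq M) \to 0$, identified with the target $K_{p-1}(N)$ via the excision proposition. An analogous formula involving projections covers the opposite parity of $p$.

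\emph{Factorization and base case.} Inside the collar, finite propagation speed allows us to replace $\chi(D_M)$ modulo $\cstar(M)$ by an expression in which the $(-1,1)$-coordinate and the $N$-directions are disentangled: it factors as an external product of a one-dimensional Bott-type unitary (built from $\chi(D_{(-1,1)})$ and the cutoffs $\rho(a_\pm)$) with a representative of $[D_N]$ on $N$. Under the standard multiplicativity of K-theory boundary maps with respect to external products, this reduces the problem to computing the Mayer-Vietoris boundary of $[D_{(-1,1)}]$ for the model decomposition $\R = (-\infty,0] \cup [0,\infty)$. A direct computation, essentially the identification of the Wiener-Hopf extension boundary with the Bott element, gives $\partial_{MV}[D_{(-1,1)}] = 1 \in K_0(\mathrm{pt})$. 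Multiplying by $[D_N]$ yields $\partial_{MV}[D_M] = [D_N]$.

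\emph{Main obstacle.} The delicate step is the factorization above: showing rigorously that $\rho(a_+)\chi(D_M) + \rho(a_-)$ really is, modulo the relevant ideal, an external product of a one-dimensional Bott-type unitary with a representative of $[D_N]$, and that this factorization intertwines the C*-algebraic $\partial$ with the K-homology external product. This hinges on a careful finite-propagation-speed estimate that commutes $\chi(D_M)$ past the cutoffs $\rho(a_\pm)$ and separates the $(-1,1)$- and $N$-variables in the functional calculus of the graded tensor sum. Once this is in hand, the one-dimensional base case and multiplicativity assemble the identity $\partial_{MV}[D_M] = [D_N]$.
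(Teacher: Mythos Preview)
Your approach is correct in outline but takes a substantially harder route than the paper. You set up the explicit boundary-map formula and then try to compute it directly by factoring $\rho(a_+)\chi(D_M)+\rho(a_-)$ as an external product with a one-dimensional Bott class, invoking multiplicativity of $\partial$ under external products and a base case on $\R$. The paper sidesteps this entire factorization. After writing $\partial_{MV}[F]=\partial[\varphi^+F+\varphi^-]$ (with $\varphi^\pm$ the characteristic functions of $M^\pm$, which are already projections), it applies the excision isomorphism
\[
K_{1-p}\bigl(\dstar(M)/\dstar(N\subseteq M)\bigr)\;\cong\;K_p(M^--N)\oplus K_p(M^+-N)
\]
to recognize that $\partial_{MV}$ is nothing other than the ordinary K-homology boundary map $\partial\colon K_p(M^+-N)\to K_{p-1}(N)$ applied to the class of $D_M$ restricted to the open half. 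The result then follows from the standard ``boundary of Dirac is Dirac'' computation in analytic K-homology (Chapter~11 of \cite{AnalyticKH}), which already encodes the product structure you are trying to exploit by hand.

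What each approach buys: the paper's route is short because it reduces the lemma to two off-the-shelf facts (excision for the relative dual algebra, and the boundary behavior of Dirac classes), neither of which requires any new estimates. Your route is more self-contained and would make the role of the collar product structure completely explicit, but the step you flag as the ``main obstacle'' --- separating the $(-1,1)$- and $N$-variables in $\chi(D_M)$ modulo the ideal and verifying that this factorization intertwines $\partial$ with the external product --- is genuinely delicate. It is essentially a reproof of the product formula for K-homology boundary maps, and carrying it out carefully (with the graded tensor sum, the finite-propagation commutator estimates, and the multigrading bookkeeping) is comparable in length to the cited chapter. If you want to pursue your line, the cleanest way is to invoke the external product formula for K-homology boundary maps directly rather than rederiving it at the level of operators.
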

\begin{proof}
Let $\varphi^+$ and $\varphi^-$ denote the multiplication operators by the characteristic functions of $M^+$ and $M^-$, respectively.  Observe that these operators are projections and $(\varphi^+, \varphi^-)$ defines a partition of unity for the decomposition $\dstar(M) = \dstar(M^+ \subseteq M) + \dstar(M^- \subseteq M)$.  The K-homology class $[D_M] \in K_p(M) = K_{1-p}(\dstar(M))$ is represented by an operator $F$ in a matrix algebra over $\dstar(M)$ ($F$ is either a projection or a unitary depending on the multigrading structure of $D_M$), and $\partial_{MV}$ sends $[F]$ to the image of $[\varphi^+ F + \varphi^-] \in K_{1-p}(\dstar(M)/\dstar(N \subseteq M))$ under the boundary map
\begin{equation}\label{PartitionMV}
K_{1-p}(\dstar(M)/\dstar(N \subseteq M)) \to K_{-p}(\dstar(N \subseteq M)) \cong K_{p-1}(N)
\end{equation}

According to the excision theorem in K-homology (\cite{AnalyticKH}, chapter 5), 
\begin{equation*}
K_{1-p}(\dstar(M)/\dstar(N \subseteq M)) \cong K_p(M^- - N) \oplus K_p(M^+ - N)
\end{equation*}
so $\partial_{MV}$ sends $[D_M]$ to $\partial[D_M]$ where
\begin{equation*}
\partial \colon K_p(M^+ - N) \to K_{p-1}(N)
\end{equation*}
is the boundary map in K-homology.  Because of the local structure of $D_M$ near $N$, standard results in analytic K-homology (\cite{AnalyticKH}, chapter 11) imply that $\partial_{MV}[D_M] = [D_N]$ (this is essentially an application of the slogan, "the boundary of Dirac is Dirac"). 
\end{proof}

The proof of the partitioned manifold index theorem now follows immediately:

\begin{proof}[Proof of Theorem \ref{PMIT}]
Using the Mayer-Vietoris and analytic surgery diagram, there is a commuting diagram:
\begin{equation*}
\xymatrix{
K_p(M) \ar[rr] \ar[dd]^{\partial_{MV}} \ar[ddrr]^{\text{Ind}_{M,N}^G} & & K_p(C_G^*(\wt{M} \times G)) \ar[dd]^{\partial_{MV}} \\ & & \\
K_{p-1}(N) \ar[rr]^{\text{Ind}_N^G} & & K_{p-1}(C_r^*(G)) 
}
\end{equation*}
Commutativity of this diagram together with Lemma \ref{MVDirac} completes the proof.
\end{proof}

\subsection{Generalizations and Applications}
The main idea of Theorem \ref{PMIT} is to compute the index of an operator on a non-compact manifold $M$ by localizing the calculation to a compact partitioning hypersurface.  This is possible because the partitioning structure provides a mechanism for relating the uniform geometry of $M$ to the uniform geometry of $\R$.  This in turn suggests that a similar result can be proved for a manifold whose uniform geometry is related to that of $\R^k$.

\begin{definition}
Let $M$ be a smooth manifold and let $N$ be a submanifold of codimension $k$.  A {\em $k$-partitioning map} for the pair $(M,N)$ is a coarse submersion $F \colon M \to \R^k$ such that $N = F^{-1}(0)$.  Say that $M$ is {\em $k$-partitioned} by $N$ if there exists a $k$-partitioning map for $(M,N)$.
\end{definition}

We shall consider operators on $M$ which have a favorable local structure near $N$, just as before:

\begin{definition}
Let $M$ be a smooth manifold and suppose $N$ is a submanifold of $M$ of codimension $k$.  Let $D_M$ be a $p$-multigraded differential operator acting on a smooth $p$-multigraded vector bundle $S_M \to M$, $p \geq k$, and let $D_N$ be a $(p-k)$-multigraded differential operator acting on a smooth $(p-k)$-multigraded vector bundle $S_N \to N$.  Say that $D_M$ is {\em $k$-partitioned by $D_N$} if there is a collaring neighborhood $U \cong (-1,1)^k \times N$ of $N$ in $M$ with the following properties:
\begin{itemize}
\item $S_M |_U \cong S_{(-1,1)^k} \hat{\otimes} S_N$ where $S_{(-1,1)^k}$ is the complex spinor bundle on the cube $(-1,1)^k$
\item $D_M = D_{(-1,1)^k} \hat{\otimes} 1 + 1 \hat{\otimes} D_N$ where $D_{(-1,1)^k}$ is the complex spinor Dirac operator
\end{itemize}
\end{definition}

Suppose $M$ is $k$-partitioned by a compact hypersurface $N$ and $\wt{M}$ is a locally isometric $G$-cover of $M$ where $G$ is a countable discrete group.  Suppose further that the $k$-partitioning map $F \colon M \to \R^k$ lifts to a $k$-partitioning map $\wt{F} \colon \wt{M} \to \R^k$ for the pair $(\wt{M},\wt{N})$.  Then $\wt{F}$ induces a map
\begin{equation*}
\wt{F}_* \colon K_p(C_G^*(\wt{M})) \to K_p(C_G^*(\R^k \times G))
\end{equation*}
By induction on $k$ there is an isomorphism $K_p(C_G^*(\R^k \times G)) \cong K_{p-k}(C_r^*(G))$ given by iterated Mayer-Vietoris boundary maps.  Thus we have a $k$-partitioned index map:
\begin{equation*}
\text{Ind}_{M,N}^G \colon K_p(M) \to K_p(C_G^*(\wt{M})) \cong K_{p-k}(C_r^*(G))
\end{equation*}
analogous to the partitioned index map defined above.  Our main result about $k$-partitioned manifolds computes this index map.

\begin{proposition} \label{kPMIT}
Let $M$ be a complete Riemannian manifold and let $\wt{M}$ be a locally isometric $G$-cover of $M$ where $G$ is a countable discrete group.  Let $N$ be a submanifold of $M$ which lifts to a submanifold $\wt{N}$ of $\wt{M}$, and suppose there is a $k$-partitioning map $F \colon M \to \R^k$ for the pair $(M,N)$ which lifts to a $k$-partitioning map $\wt{F} \colon \wt{M} \to \R^k$ for the pair $(\wt{M},\wt{N})$.  If $D_M$ is a $p$-multigraded Dirac-type operator on $M$, $p \geq k$, which is $k$-partitioned by a $(p-k)$-multigraded Dirac-type operator $D_N$ on $N$ then
\begin{equation*}
\text{Ind}_{M,N}^G[D_M] = \text{Ind}_N^G[D_N]
\end{equation*}
in $K_{p-k}(C_r^*(G))$.
\end{proposition}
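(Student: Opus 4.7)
The plan is to proceed by induction on the codimension $k$; the base case $k=1$ is precisely Theorem \ref{PMIT}. For the inductive step, I would factor the partitioning map as $F = (F', F_k)$ with $F' = (F_1, \ldots, F_{k-1}) \colon M \to \R^{k-1}$, and consider the intermediate submanifold $N' = F_k^{-1}(0)$, which is a codimension-one hypersurface of $M$ containing $N$. The restriction $F'|_{N'}$ is a $(k-1)$-partitioning map for the pair $(N', N)$, and it lifts $G$-equivariantly along with $\wt{N'} = \wt{F_k}^{-1}(0) \subseteq \wt M$. Using the local product structure of $D_M$ near $N$ together with the associativity of the exterior tensor product of spinor bundles, I would peel off the Dirac operator on the $k$-th coordinate to construct a $(p-1)$-multigraded Dirac-type operator $D_{N'}$ on $N'$ which $1$-partitions $D_M$ in the sense of Definition \ref{PartitionedOperator} and is itself $(k-1)$-partitioned by $D_N$.

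The heart of the proof is a diagram chase combining two naturalities. The Mayer--Vietoris and analytic surgery braid diagram applied to the decomposition $M = M^+ \cup M^-$, with $M^+ = F_k^{-1}([0,\infty))$ and $M^- = F_k^{-1}((-\infty,0])$, supplies a commutative square relating the assembly maps on $M$ and on $N'$ to their respective Mayer--Vietoris boundary maps $\partial_{MV}^{KH}\colon K_p(M) \to K_{p-1}(N')$ and $\partial_{MV}^C\colon K_p(C_G^*(\wt M)) \to K_{p-1}(C_G^*(\wt{N'}))$. Naturality of the coarse Mayer--Vietoris sequence under $\wt F$, applied to the compatible decomposition of $\R^k \times G$ into the two half-spaces $\R^{k-1} \times [0,\infty) \times G$ and $\R^{k-1} \times (-\infty,0] \times G$, supplies a second commutative square intertwining $\partial_{MV}^C$ with the boundary $\partial_{MV}^{\R}\colon K_p(C_G^*(\R^k \times G)) \to K_{p-1}(C_G^*(\R^{k-1} \times G))$ that strips off one factor of the iterated isomorphism $K_p(C_G^*(\R^k \times G)) \cong K_{p-k}(C_r^*(G))$. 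Stringing these squares together and evaluating on $[D_M]$, Lemma \ref{MVDirac} identifies $\partial_{MV}^{KH}[D_M] = [D_{N'}]$, and composing with the remaining $(k-1)$-fold Mayer--Vietoris isomorphism yields
\begin{equation*}
\text{Ind}_{M,N}^G[D_M] = \text{Ind}_{N',N}^G[D_{N'}]
\end{equation*}
The inductive hypothesis applied to $(N', N)$ then gives $\text{Ind}_{N',N}^G[D_{N'}] = \text{Ind}_N^G[D_N]$, closing the induction.

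The main obstacle I anticipate is verifying the excision hypotheses needed to invoke the braid diagram: namely, that $M^+$ and $M^-$ are closed in $M$ and form an $\omega$-excisive pair in the sense of Proposition \ref{excision}, and that the analogous statement holds for the coarse decomposition of $\R^k \times G$. Since $F_k$ is a component of a coarse submersion with $N'$ as a regular level set transverse to $0$, both properties ought to follow from the tubular neighborhood structure around $N'$, but the quantitative coarse estimates require some care. A secondary point is that Lemma \ref{MVDirac} was phrased with the partitioning hypersurface implicitly compact; however its proof uses only the local product structure of the Dirac operator and the excision theorem in K-homology, so it extends without change to the noncompact hypersurface $N'$ appearing at the inductive step.
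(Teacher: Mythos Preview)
Your proposal is correct and matches the paper's own proof essentially step for step: the same induction on $k$, the same intermediate hypersurface $N' = F_k^{-1}(0) = M^+ \cap M^-$, the same operator $D_{N'}$ obtained by peeling off one Dirac factor, and the same reduction via the Mayer--Vietoris and analytic surgery braid diagram together with Lemma~\ref{MVDirac}. The concerns you flag about $\omega$-excisiveness and the noncompactness of $N'$ in Lemma~\ref{MVDirac} are legitimate but are handled (or glossed over) in the paper exactly as you suggest.
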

\begin{proof}
We use induction on $k$; the base case is simply Theorem \ref{PMIT}, so assume $k \geq 2$.  Since $F$ is a submersion the sets $M^+ = F^{-1}(\R^{k-1} \times \R^{\geq 0})$ and $M^- = F^{-1}(\R^{k-1} \times \R^{\leq 0})$ are submanifolds with boundary which partition $M$, and the partitioning hypersurface $N' = M^+ \cap M^-$ is $(k-1)$-partitioned by $N$.  Moreover $D_M$ is partitioned by the $(k-1)$-multigraded operator $D_{N'} = D_{(-1,1)^{k-1}} \hat{\otimes} 1 + 1 \hat{\otimes} D_N$ and $D_{N'}$ is $(k-1)$-partitioned by $D_N$.  By the induction hypothesis it suffices to show that $\text{Ind}_{M,N}^G[D_M] = \text{Ind}_{N',N}[D_{N'}]$ in $K_{p-k}(C_r^*(G))$.  As in the proof of Theorem \ref{PMIT} there is a commutative diagram 
\begin{equation*}
\xymatrix{
K_p(M) \ar[dr]^{\text{Ind}_{M,N}^G} \ar[dd]_{\partial_{MV}} \\
& K_{p-k}(C_r^*(G)) \\
K_{p-1}(N') \ar[ur]_{\text{Ind}_{N',N}^G}
}
\end{equation*}
so the result follows from Lemma \ref{MVDirac}.
\end{proof}

We conclude by providing an application of Proposition \ref{kPMIT} to the theory of positive scalar curvature invariants.  A consequence of the classical Lichnerowicz formula is that if $M$ is a complete Riemannian spin manifold whose scalar curvature function is bounded below by a positive constant and $D$ is the spinor Dirac operator on $M$ then the K-homology class of $D$ lifts to the analytic structure group (\cite{AnalyticKH}, chapter 12).  According to the analytic surgery exact sequence
\begin{equation*}
\to K_{p+1}(C^*(M)) \to S_p(M) \to K_p(M) \to K_p(C^*(M)) \to
\end{equation*}
this implies that the coarse assembly map sends $[D]$ to $0$ in $K_p(C^*(M))$.  This observation together with Proposition \ref{kPMIT} yield a new proof of a celebrated theorem of Gromov and Lawson (\cite{GromovLawson}).

\begin{theorem} [Gromov and Lawson]
Let $M$ be a compact manifold of dimension $n$ which admits a Riemannian metric of non-positive sectional curvature.  Then $M$ has no metric of positive scalar curvature.
\end{theorem}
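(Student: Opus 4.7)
The plan is to argue by contradiction. Suppose $M$ (of dimension $n$) admits both a metric $g_0$ of non-positive sectional curvature and a metric $g_1$ of positive scalar curvature; let $\wt{M}$ denote the universal cover, carrying lifted metrics $\wt{g}_0$ and $\wt{g}_1$. Because $M$ is compact, $\wt{g}_0$ and $\wt{g}_1$ are bi-Lipschitz equivalent on $\wt{M}$ and so induce the same coarse structure. By Cartan-Hadamard, the exponential map $\exp_p \colon T_p\wt{M} \to \wt{M}$ at any basepoint $p$ with respect to $\wt{g}_0$ is a diffeomorphism, so $\wt{M}$ is diffeomorphic to $\R^n$, in particular spin, and $F := \exp_p^{-1} \colon \wt{M} \to \R^n$ is a smooth submersion with $F^{-1}(0) = \{p\}$. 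The idea is to compute the partitioned index of the spin Dirac operator $D_{\wt{M}}$ with respect to $F$ in two incompatible ways.

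For the first (nontrivial) computation I would verify that $F$ is an $n$-partitioning map and that $D_{\wt{M}}$ can be put in the product form required by Definition \ref{PartitionedOperator} near $p$. Coarseness of $F$ follows from the fact that radial geodesics out of $p$ in $(\wt{M}, \wt{g}_0)$ are length-preserving under $\exp_p$, so preimages of Euclidean balls are $\wt{g}_0$-balls, which are bounded. For the product form, deform the Riemannian metric on $\wt{M}$ inside the fixed open set $U = F^{-1}((-1,1)^n)$ so that it becomes the Euclidean metric pulled back along $F$; the resulting spin Dirac operator is $n$-partitioned by the standard graded Dirac-type operator $D_p$ on the point, which represents the generator $1 \in K_0(\{p\}) = \Z$. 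Since the deformation is compactly supported, it affects neither the coarse structure of $\wt{M}$ nor the K-homology class $[D_{\wt{M}}] \in K_n(\wt{M})$, which is a spin-topological invariant independent of the choice of metric. Proposition \ref{kPMIT}, applied with $k = n$ and trivial group, therefore yields
\[
\text{Ind}_{\wt{M},\{p\}}[D_{\wt{M}}] \;=\; \text{Ind}_{\{p\}}[D_p] \;=\; 1 \;\in\; K_0(\C).
\]

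For the second computation, observe that the partitioned index map factors as
\[
K_n(\wt{M}) \xrightarrow{\mu} K_n(C^*(\wt{M})) \xrightarrow{F_*} K_n(C^*(\R^n)) \xrightarrow{\cong} K_0(\C).
\]
The metric $\wt{g}_1$ is complete with scalar curvature uniformly bounded below by a positive constant (inherited from compact $M$), so the Lichnerowicz argument recalled above lifts $[D_{\wt{M}}]$ to $S_n(\wt{M})$ and in particular forces $\mu([D_{\wt{M}}]) = 0$; hence $\text{Ind}_{\wt{M},\{p\}}[D_{\wt{M}}] = 0$, contradicting the previous display. The principal technical subtlety is the three-way interplay between $\wt{g}_0$ (which builds the partitioning map), $\wt{g}_1$ (which supplies positive scalar curvature), and the auxiliary flattened metric (which enforces the product form of the Dirac operator near $p$); the fact that all three metrics induce the same coarse structure on $\wt{M}$, combined with the metric-independence of $[D_{\wt{M}}]$, is what reconciles the two computations and produces the contradiction.
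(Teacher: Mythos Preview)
Your proof is correct and follows essentially the same route as the paper's: Cartan--Hadamard on the non-positively curved universal cover supplies the $n$-partitioning map $F = \exp_p^{-1}$, Proposition~\ref{kPMIT} computes the partitioned index as $1$, and Lichnerowicz vanishing of the coarse index under uniformly positive scalar curvature gives the contradiction; your explicit handling of the three metrics and the compactly supported flattening near $p$ is in fact more careful than the paper's own treatment of the product form. The one place where the paper is sharper is the coarseness of $F$: length-preservation along radial geodesics gives properness but not the bornologous condition, and the paper instead invokes the expansivity $d(\exp_p v,\exp_p w)\ge \|v-w\|$ of the exponential map in non-positive curvature, which makes $F$ outright $1$-Lipschitz.
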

\begin{proof}
Let $\wt{M}$ be the universal cover of $M$ equipped with a Riemannian metric of non-positive sectional curvature (lifted from a metric $g$ on $M$).  $\wt{M}$ is simply connected, so according to the Cartan-Hadamard theorem the exponential map $\exp_p \colon T_p M \to M$ is a diffeomorphism for any $p$.  In fact we can say more: the exponential map is expansive in the sense that $d(\exp_p(v_1),\exp_p(v_2)) \geq \norm{v_1 - v_2}$ for any $v_1, v_2 \in T_p M$, so the inverse map $\log \colon \wt{M} \to T_p M$ is a coarse diffeomorphism.

Now, let $g'$ be any Riemannian metric on $M$.  Any two norms on a finite dimensional vector space are equivalent, so there is a constant $C$ such that $\frac{1}{C}\norm{v}' \leq \norm{v} \leq C$ for every $v \in T_p M$.  Integrating, it follows that $\frac{1}{C}d'(x,y) \leq d(x,y) \leq C d'(x,y)$ for every $x, y \in M$, so it follows that the map $\log \colon \wt{M} \to T_p M$ defined using the metric $g$ is still a coarse map with respect to the metric $g'$ (lifted to $\wt{M}$).  Thus the $0$-dimensional submanifold $\{p\}$ of $\wt{M}$ is $n$-partitioned by the coarse diffeomorphism $\log \colon \wt{M} \to T_p M \cong \R^n$.  

Clearly $\wt{M}$ is contractible, so let $S_{\wt{M}}$ denote the trivial spinor bundle $\R_n \times \wt{M}$ and let $D_{\wt{M}}$ denote the spinor Dirac operator on $S_{\wt{M}}$ (defined using the Riemannian metric $g'$).  Consider the trivial bundle $S_p = \C \times \{p\} \to \{p\}$ and let $D_p$ be the zero map on $S_p$; $D_{\wt{M}}$ is $k$-partitioned by $D_p$ because $S_{\wt{M}} \cong \R_n \hat{\otimes} S_p$ and $D_{\wt{M}} = D_{\R^n} \hat{\otimes} 1 + 1 \hat{\otimes} D_p$.  By the index theorem for $k$-partitioned manifolds, we have
\begin{equation*}
\text{Ind}_{\wt{M},\{p\}}(D_{\wt{M}}) = \text{Ind}_{\{p\}}(D_p) = 1
\end{equation*}
in $\Z$.  But $\text{Ind}_{\wt{M},\{p\}}$ factors through the coarse index map
\begin{equation*}
K_p(\wt{M}) \to K_p(C^*(\wt{M}))
\end{equation*}
and the image under this map of $D_{\wt{M}}$ would be $0$ if the scalar curvature function on $M$ associated to $g'$ were positive.  Thus $M$ can have no metric of positive scalar curvature.
\end{proof}

\newpage
\bibliographystyle{plain}
\bibliography{refs}
\end{document}